\definecolor{gray}{rgb}{0.5,0.5,0.5}
\pgfplotsset{compat=1.18} 
\tiny\color{gray},
\newlist{steps}{enumerate}{1}
\setlist[steps, 1]{label = Step \arabic*:}
\newtheorem{theorem}{Theorem}[section]
\newtheorem{proposition}[theorem]{Proposition}
\newtheorem{lemma}[theorem]{Lemma}
\newtheorem{corollary}[theorem]{Corollary}
\theoremstyle{definition}
\newtheorem{definition}[theorem]{Definition}
\newtheorem{example}[theorem]{Example}
\theoremstyle{remark}
\newtheorem{remark}[theorem]{Remark}
\def\MR#1{\href{http://www.ams.org/mathscinet-getitem?mr=#1}{MR}}
\definecolor{riyared}{rgb}{0,0.5,0} 
\definecolor{riyablue}{rgb}{0,0.5,0.5} 
\newcommand{\rout}{\bgroup\markoverwith{\textcolor{riyared}{\rule[0.5ex]{2pt}{1pt}}}\ULon} 
\DeclareMathOperator{\fix}{Fix}
\DeclareMathOperator{\edeg}{EDeg}
\newcommand{\A}{A}
\newcommand{\Gne}{\mathcal{G}_{n,e}}
\begin{document}

\title[Lights Out on Nearly Complete Graphs]{Lights Out on Nearly Complete Graphs}

\author{Bradley Forrest}
\address{101 Vera King Farris Drive; Galloway, NJ 08205}
\email{bradley.forrest@stockton.edu}

\author{Riya Goyal}
\address{} 
\email{rg85@njit.edu} 

\subjclass[2020]{05C57, 05C80, 60C99}

\keywords{Lights Out, Nearly Complete Graphs} 

\begin{abstract}
We study the generalization of the game Lights Out in which the standard square grid board is replaced by a graph. We examine the probability that, when a graph is chosen uniformly at random from the set of graphs with $n$ vertices and $e$ edges, the resulting game of Lights Out is universally solvable. Our work focuses on nearly complete graphs, graphs for which $e$ is close to $\binom{n}{2}$. For large values of $n$, we prove that, among nearly complete graphs, the probability of selecting a graph that gives a universally solvable game of Lights Out is maximized when $e = \binom{n}{2} - \lfloor \frac{n}{2} \rfloor$. More specifically, we prove that for any fixed integer $m > 0$, as $n$ approaches $\infty$, this value of $e$ maximizes the probability over all values of $e$ from $\binom{n}{2} - \lfloor \frac{n}{2} \rfloor - m$ to $\binom{n}{2}$.
\end{abstract}
\maketitle

\section{Introduction}
Lights Out is a one-player handheld electronic puzzle game played on a $5 \times 5$ grid of square lights. The player's goal is, when provided with an initial configuration of lights that are {\it on}, to turn off all of the lights. The lights do not toggle individually; each light can be pressed, and pressing a light toggles not only that light but each light that shares a boundary edge with the pressed light.

Generalizations of Lights Out have been the subject of substantial mathematical research. The most common generalization alters the geometry of the lights. Instead of playing Lights Out on a grid, we replace the grid with a graph $\Gamma$. In this game, the lights are the vertices $V\Gamma$ of $\Gamma$, and pressing a light toggles that vertex and each vertex adjacent to it. Lights Out is {\it universally solvable} on $\Gamma$ if, for each initial configuration $c \subseteq V\Gamma$, the resulting puzzle can be solved. In this work, we study which {\it nearly complete graphs}, graphs with $n$ vertices and $e$ edges where $e$ is close to the maximum number $N = \binom{n}{2}$ of possible edges, yield a universally solvable game of Lights Out. While we occasional make use of vertex labels, unless otherwise stated, the graphs in this document are unlabeled.

Many authors have examined universal solvability for this graph theoretic generalization of Lights Out \cite{AS1, AS2, Anderson, CM, CHKR, EES, HY}. In \cite{FM}, Forrest and Manno considered the problem of Lights Out on random graphs and used computational methods to show that when a graph is chosen uniformly at random from the set of graphs with $n$ vertices, the probability that Lights Out is universally solvable on $\Gamma$ is approximately .419 as $n$ approaches $\infty$; some authors have expanded on this work by considering Lights Out on graphs with $n$ vertices \cite{AR, JY}. We consider a related question: when a graph $\Gamma$ is chosen uniformly at random from the set $\Gne$ of graphs with $n$ vertices and $e$ edges, what is the probability $P_{n,e}$ that Lights Out is universally solvable on $\Gamma$? We make use of both deductive and computational methods. As an example of our computational results and as motivation for our investigation, consider Figure \ref{fig:11verts}. This figure gives, for each integer $e$ from 1 to $\binom{11}{2} - 1 = 54$, the approximate probability when a graph $\Gamma$ is chosen uniformly at random from $\mathcal{G}^{11}_e$ that $\Gamma$ will have a universally solvable game of Lights Out. We obtained these approximate probabilities by Monte Carlo experiments which are discussed in detail in Section 8. There are many aspects of Figure \ref{fig:11verts} that merit exploration; in this work we examine the local maximum that occurs when $e = 50 = \binom{11}{2} - \lfloor \frac{11}{2} \rfloor$. We have partially answered two questions regarding this local maximum.
\begin{enumerate}
    \item For what range of $n$ does this local maximum occur?
    \item When this local maximum occurs, over what range of $e$ is it the absolute maximum?
\end{enumerate}
Our main theorem, Theorem \ref{thm:main}, states that for sufficiently large $n$ this local maximum occurs; our computational results suggest that this local maximum occurs for all $n$  $\geq 2$. Additionally, Theorem \ref{thm:main} states that for any fixed positive integer $m > 0$ and $n$ sufficiently large, $e = N - \lfloor \frac{n}{2} \rfloor$ gives that maximum probability over all values of $e$ ranging from $N - \lfloor \frac{n}{2} \rfloor- m$ to $N$.

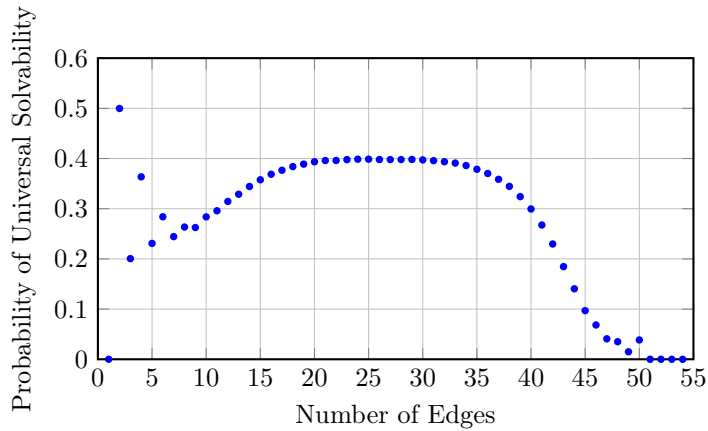
\begin{figure}[htbp]
\centering
\begin{tikzpicture}
\begin{axis}[
    width=0.85\textwidth,
    height=0.5\textwidth,
    xlabel={Number of Edges},
    ylabel={Probability of Universal Solvability},
    ymin=0, ymax=0.6,
    xmin=0, xmax=55,
    xtick={0,5,...,55},
    ytick={0,0.1,...,0.6},
    grid=both,
    grid style={line width=.1pt, draw=gray!30},
    major grid style={line width=.2pt,draw=gray!50},
    thick,
    every axis plot/.append style={only marks, mark=*, mark size=1pt},
]
\addplot[color=blue] coordinates {
    (1,0) (2,0.499724) (3,0.200551) (4,0.363507) (5,0.230845)
    (6,0.283949) (7,0.244384) (8,0.263545) (9,0.262488) (10,0.283742)
    (11,0.295869) (12,0.314470) (13,0.328777) (14,0.344268) (15,0.357644)
    (16,0.368724) (17,0.376659) (18,0.383929) (19,0.388856) (20,0.393536)
    (21,0.396004) (22,0.396182) (23,0.397806) (24,0.398648) (25,0.398657)
    (26,0.397970) (27,0.397904) (28,0.397958) (29,0.398135) (30,0.397270)
    (31,0.395805) (32,0.393628) (33,0.390987) (34,0.386070) (35,0.378623)
    (36,0.370198) (37,0.358598) (38,0.344516) (39,0.324121) (40,0.299494)
    (41,0.267545) (42,0.229736) (43,0.184645) (44,0.140461) (45,0.096906)
    (46,0.068320) (47,0.040787) (48, 0.035003) (49, 0.014937) (50, 0.038504) 
    (51,0) (52,0) (53,0) (54,0)
};
\end{axis}
\end{tikzpicture}
\caption{
  For each value of $e$ from 1 to $\binom{11}{2} - 1 = 54$, the approximate probability that, when a graph is chosen uniformly at random from the set of graphs with 11 vertices and $e$ edges, Lights Out is universally solvable on that graph.
}
\label{fig:11verts}
\end{figure}

This paper is organized as follows. Section 2 introduces the relationship between Lights Out and linear algebra. Section 3 examines dominating sets and uses them to explore the interaction between graph joins and universal solvability. In Section 4, we recast the results of Sections 2 and 3 in terms of complement graphs. Section 5 introduces and applies the excess degree of a graph to study universal solvability in Lights Out. In Section 6, we use the results of Section 5 to determine, for the case where $e$ is slightly less than $N - \lfloor \frac{n}{2} \rfloor$, the number of graphs with $n$ vertices and $e$ edges on which Lights Out is universally solvable. Section 7 proves our main theorem, Theorem \ref{thm:main}, by applying the results of Section 6. Section 8 describes our computational methods and gives the results of our Monte Carlo experiments. In Section 9, we discuss potential avenues for future work.

\section{Lights Out and Linear Algebra}

In this section, we briefly review the connection between Lights Out and linear algebra. This review is covered in more detail in Section 2 of \cite{FM}. Additionally, we use this connection to establish some basic results.

\begin{definition}
Let $\Gamma$ be a graph whose vertices are labeled $1, 2, \ldots n$. The \emph{neighborhood matrix} $\A = [a_{ij}]$ of $\Gamma$ is the matrix with entries in the field $\mathbb{Z}_2$ where $a_{ij} = 1$ if $i=j$ or if vertices $i$ and $j$ are adjacent and $a_{ij} = 0$ otherwise.
\end{definition}

The neighborhood matrix gives a concrete method to determine whether Lights Out is universally solvable on $\Gamma$.

\begin{theorem}[\cite{GKT, GKTZ, Sutner1, Sutner2}]
Lights Out on a graph $\Gamma$ is universally solvable if and only if the neighborhood matrix of $\Gamma$ is invertible.
\label{thm:inv}
\end{theorem}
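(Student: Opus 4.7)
The plan is to translate the Lights Out game into a linear system over $\mathbb{Z}_2$ and then invoke the standard fact that a square matrix over a field is surjective if and only if it is invertible. First I would encode both configurations and press-sets as vectors in $\mathbb{Z}_2^n$: identify an initial configuration $c \subseteq V\Gamma$ with its indicator vector in $\mathbb{Z}_2^n$, and similarly identify a choice of which vertices to press with an indicator vector $p \in \mathbb{Z}_2^n$. Because pressing the same vertex twice has no net effect and the order of presses is irrelevant, every sequence of presses is captured by such a vector $p$.

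Next I would compute the effect of a press-set $p$ on each vertex. By the definition of the neighborhood matrix, $a_{ij} = 1$ exactly when pressing vertex $j$ toggles vertex $i$ (the diagonal handling the fact that a vertex toggles itself). So the parity of the number of times vertex $i$ is toggled by the presses in $p$ is $\sum_j a_{ij} p_j = (\A p)_i$, computed in $\mathbb{Z}_2$. Starting from the configuration $c$, the resulting configuration after executing the presses $p$ is therefore $c + \A p$. Solving the puzzle with initial configuration $c$ amounts to finding $p$ with $c + \A p = 0$, equivalently $\A p = c$.

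Finally I would conclude: Lights Out is universally solvable on $\Gamma$ if and only if the equation $\A p = c$ has a solution $p$ for every $c \in \mathbb{Z}_2^n$, i.e., if and only if the linear map $\A \colon \mathbb{Z}_2^n \to \mathbb{Z}_2^n$ is surjective. Since $\A$ is a square matrix over the field $\mathbb{Z}_2$, surjectivity is equivalent to invertibility, proving the theorem.

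There is no real obstacle here; the only subtlety worth being careful about is the observation that presses commute and are involutions, which is what justifies working with the indicator vector $p \in \mathbb{Z}_2^n$ rather than with arbitrary sequences of presses. Once that reduction is made, the rest is a direct matrix-equation rewriting followed by the standard linear-algebra fact for square matrices over a field.
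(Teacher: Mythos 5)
Your proof is correct and is exactly the standard linearization argument: presses are commuting involutions, so a strategy is an indicator vector $p \in \mathbb{Z}_2^n$, its effect is $c \mapsto c + \A p$, and universal solvability is surjectivity of the square matrix $\A$ over the field $\mathbb{Z}_2$, which is equivalent to invertibility. The paper does not give its own proof of this theorem but cites \cite{GKT, GKTZ, Sutner1, Sutner2}, and your argument is the same one found in those sources, so there is nothing to add.
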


\begin{remark}
The invertibility of the neighborhood matrix does not depend on the labeling of the vertices of $\Gamma$. If $\A$ and $\A'$ are neighborhood matrices for $\Gamma$ given by two different labelings, the matrices are similar because conjugating $\A$ by a permutation matrix gives $\A'$. That is, $\A'$ is invertible if and only if $\A$ is invertible.
\end{remark}

Recall that any matrix with two identical rows is not invertible. In our context, the neighborhood matrix of $\Gamma$ has identical rows corresponding to distinct vertices $u$ and $v$ when $u$ and $v$ are adjacent to each other and adjacent to the same set of vertices in $V\Gamma \setminus \{u, v\}$. This observation gives the following results.

\begin{proposition}
Let $\Gamma$ be a graph with adjacent vertices $u$ and $v$. If $u$ and $v$ are adjacent to the same set of vertices in $V\Gamma \setminus \{u, v\}$, then Lights Out is not universally solvable on $\Gamma$.
\label{prop:sameadj}
\end{proposition}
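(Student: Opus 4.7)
The plan is to invoke Theorem~\ref{thm:inv} and exhibit a concrete linear dependence among the rows of the neighborhood matrix $\A$, thereby showing $\A$ is not invertible. Since the remark immediately preceding the proposition tells us that invertibility is independent of the labeling, I will choose the labeling that makes the argument cleanest.

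First I would label the vertices of $\Gamma$ so that $u$ receives label $1$ and $v$ receives label $2$, with the remaining vertices labeled $3, 4, \ldots, n$ in any order. Then I would read off rows $1$ and $2$ of $\A$ entry by entry. The diagonal entries give $a_{11} = a_{22} = 1$. The assumption that $u$ and $v$ are adjacent gives $a_{12} = a_{21} = 1$, so in columns $1$ and $2$ the two rows agree (both equal to $1$ in both positions). For each $j \geq 3$, the hypothesis that $u$ and $v$ have the same set of neighbors in $V\Gamma \setminus \{u,v\}$ says that $u$ is adjacent to vertex $j$ if and only if $v$ is adjacent to vertex $j$, so $a_{1j} = a_{2j}$. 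Therefore rows $1$ and $2$ of $\A$ are identical.

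A matrix with two equal rows has nontrivial kernel (the vector $e_1 + e_2$ works over $\mathbb{Z}_2$), so $\A$ is not invertible. By Theorem~\ref{thm:inv}, Lights Out on $\Gamma$ is not universally solvable, which is the desired conclusion.

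There is no real obstacle here; the only subtlety is making sure the argument uses both parts of the hypothesis correctly, namely that the adjacency of $u$ and $v$ accounts for the agreement of the two rows in columns $1$ and $2$ (via the symmetric off-diagonal entries $a_{12} = a_{21} = 1$ together with the diagonal $1$'s), while the shared neighborhood outside $\{u,v\}$ accounts for the agreement in the remaining columns.
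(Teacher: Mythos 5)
Your proof is correct and matches the paper's argument: the paper also deduces this proposition from the observation that the rows of the neighborhood matrix corresponding to $u$ and $v$ are identical (using adjacency of $u$ and $v$ together with their shared neighborhood), hence the matrix is singular and Theorem~\ref{thm:inv} applies. You simply spell out the row comparison and the kernel vector more explicitly than the paper does.
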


Recall that $v \in V\Gamma$ is \emph{dominating} if $v$ is adjacent to every vertex in $V\Gamma \setminus \{ v \}$.

\begin{corollary}
If $\Gamma$ has more than one dominating vertex, then Lights Out is not universally solvable on $\Gamma$.
\label{cor:sameadj}
\end{corollary}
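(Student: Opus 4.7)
The plan is to deduce the corollary directly from Proposition \ref{prop:sameadj} by checking that two dominating vertices automatically satisfy its hypotheses. Let $u$ and $v$ be two distinct dominating vertices of $\Gamma$. I would first note that since $u$ is dominating and $v \neq u$, vertex $u$ is adjacent to $v$; symmetrically $v$ is adjacent to $u$, so the two vertices are adjacent to each other.

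Next, I would verify the shared-neighborhood condition. Because $u$ is dominating, it is adjacent to every vertex of $V\Gamma \setminus \{u\}$, so in particular to every vertex of $V\Gamma \setminus \{u,v\}$. The same holds for $v$ with $u$ in place of $v$. Therefore $u$ and $v$ are adjacent and are adjacent to precisely the same set of vertices in $V\Gamma \setminus \{u,v\}$, so Proposition \ref{prop:sameadj} applies and Lights Out is not universally solvable on $\Gamma$.

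There is essentially no obstacle here; the statement is a one-line consequence of the preceding proposition, and the only thing to be careful about is stating clearly that ``dominating'' is taken in the sense introduced immediately before the corollary (adjacent to every other vertex, so a dominating vertex has degree $n-1$). The corollary could of course be strengthened by replacing ``dominating'' with ``twin with the same closed neighborhood,'' but for the stated result the short argument above suffices.
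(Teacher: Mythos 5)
Your proof is correct and follows exactly the paper's route: apply Proposition \ref{prop:sameadj} to two distinct dominating vertices, which are adjacent to each other and to every vertex of $V\Gamma \setminus \{u,v\}$. You simply spell out the verification that the paper calls immediate.
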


\begin{proof}
This is immediate from Proposition \ref{prop:sameadj} by setting $u$ and $v$ to be distinct dominating vertices.
\end{proof}

The proposition below, which is an immediate consequence of Corollary \ref{cor:sameadj}, was proved in a much more general setting by Keough and Parker \cite[Proposition 16]{KP}.

\begin{proposition}
If $\Gamma \in \Gne$ where $e > {N} - \lfloor \frac{n}{2} \rfloor$, then Lights Out is not universally solvable on $\Gamma$.
\label{prop:toomanyedges}
\end{proposition}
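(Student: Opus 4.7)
The plan is to pass to the complement graph and apply Corollary \ref{cor:sameadj}. Specifically, I would observe that a dominating vertex in $\Gamma$ is precisely an isolated vertex in the complement $\overline{\Gamma}$, so it suffices to show that $\overline{\Gamma}$ has at least two isolated vertices whenever $e > N - \lfloor n/2 \rfloor$.

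First I would note that the hypothesis $e > N - \lfloor n/2 \rfloor$ means the complement $\overline{\Gamma}$ has at most $\lfloor n/2 \rfloor - 1$ edges. Then the number of vertices of $\overline{\Gamma}$ that are incident to at least one edge is bounded above by $2(\lfloor n/2 \rfloor - 1)$, since each edge touches exactly two vertices. A short parity check handles both cases: if $n$ is even this bound is $n-2$, and if $n$ is odd it is $n-3$. In either case, at least two vertices of $\overline{\Gamma}$ are incident to no edges, i.e., are isolated in $\overline{\Gamma}$.

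Translating back to $\Gamma$, these two vertices are adjacent to every other vertex of $\Gamma$, so $\Gamma$ has at least two dominating vertices. Corollary \ref{cor:sameadj} then implies that Lights Out is not universally solvable on $\Gamma$, completing the argument.

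The proof is essentially an edge-counting argument combined with the corollary already in hand, so there is no genuine obstacle; the only thing to be careful about is the floor function in the bound $\lfloor n/2 \rfloor$, which is what forces the quick parity case split above. This also clarifies why the bound in the statement is tight: when $e = N - \lfloor n/2 \rfloor$ with $n$ even, one can distribute the $n/2$ missing edges as a perfect matching of $\overline{\Gamma}$, leaving no isolated vertex and hence no pair of dominating vertices in $\Gamma$.
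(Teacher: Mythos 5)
Your proof is correct and follows essentially the same route as the paper, which simply asserts that $\Gamma$ has at least two dominating vertices and invokes Corollary \ref{cor:sameadj}; your complement-graph edge count just makes that assertion explicit. The added tightness remark is a nice touch but not part of the statement.
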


\begin{proof}
Note that $\Gamma$ has at least 2 dominating vertices, and this now follows from Corollary \ref{cor:sameadj}.
\end{proof}

\section{Graph Joins and Dominating Sets}

In this section, we study the interaction between graph joins and universal solvability. This was first studied, in the context of all parity realizable (APR) graphs, by Amin and Slater in \cite{AS1}. Our discussion is brief; for further details, see \cite{AS1}.

Given a graph $\Gamma$, a \emph{dominating set} of $\Gamma$ is a collection of vertices $S \subseteq V\Gamma$ so that each vertex of $\Gamma$ is adjacent to or equal to a vertex in $S$. We say that $S$ is an \emph{odd dominating set} if each $v \in V\Gamma$ is adjacent to or equal to an odd number of vertices in $S$, and we define \emph{even dominating set} analogously. More generally, assign each vertex in $\Gamma$ an element in $\{0, 1\}$. Now, we can ask if there exists a dominating set $S$ so that for each $v \in V\Gamma$ that has been assigned a 0, $v$ is adjacent to or equal to an even number of elements in $S$, and for each $v$ assigned 1, $v$ is adjacent to or equal to an odd number of elements in $S$. If this is true for each possible assignment of $0$'s and $1$'s, we say $\Gamma$ is \emph{APR}. Note that $\Gamma$ is APR if and only if Lights Out is universally solvable on $\Gamma$; here the assignment of $0$'s and $1$'s determines the initial configuration of Lights Out and the dominating set $S$ is the collection of vertices that must be pressed to win.

We apply this terminology in our study of graph joins. Specifically, if $\Gamma_1$ and $\Gamma_2$ are two graphs with disjoint vertex sets, the \emph{graph join} $\Gamma_1 + \Gamma_2$ is the graph with vertex set $V\Gamma_1 \cup V\Gamma_2$ where, along with all edges that are in $\Gamma_1$ and $\Gamma_2$ individually, there is an edge between each $u \in \Gamma_1$ and $v \in \Gamma_2$. For example, if $\Gamma_2$ is a single vertex, then $\Gamma_1 + \Gamma_2$ is the graph obtained by adjoining a dominating vertex to $\Gamma_1$. Amin and Slater examined the interaction between graph joins and APR graphs:

\begin{theorem}[Theorem 3 \cite{AS1}]
Let $\Gamma_1$ and $\Gamma_2$ be two graphs. Then $\Gamma_1 + \Gamma_2$ is APR if and only if
\begin{enumerate}
    \item both $\Gamma_1$ and $\Gamma_2$ are APR, and
    \item at least one of $\Gamma_1$ or $\Gamma_2$ has an odd dominating set of even cardinality.
\end{enumerate}
    \label{thm:as}
\end{theorem}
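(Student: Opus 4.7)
The plan is to prove both directions via the block form of the neighborhood matrix, using the equivalence between APR and invertibility from Theorem~\ref{thm:inv}. Label the vertices of $\Gamma_1 + \Gamma_2$ so that $V\Gamma_1$ comes first; the neighborhood matrix then has the block form
$$A \;=\; \begin{pmatrix} A_1 & J \\ J^{T} & A_2 \end{pmatrix},$$
where $A_i$ is the neighborhood matrix of $\Gamma_i$ and $J$ is the all-ones block. So $\Gamma_1 + \Gamma_2$ is APR iff $A$ is invertible over $\mathbb{Z}_2$, and a direct computation shows $(u, v) \in \ker A$ iff $A_1 u = |v|\mathbf{1}$ and $A_2 v = |u|\mathbf{1}$, where $|\cdot|$ denotes weight mod $2$.

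The crucial auxiliary fact I will prove inline is that for any graph $\Gamma$ with neighborhood matrix $M$, every $u \in \ker M$ has even weight. Over $\mathbb{Z}_2$ the off-diagonal cross terms of $u^{T} M u$ cancel in pairs, so $u^{T} M u = \sum_i M_{ii}\,u_i^{2} = \sum_i u_i \equiv |u| \pmod 2$; but $Mu = 0$ forces $u^{T} M u = 0$, so $|u|$ is even.

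For the forward direction, suppose $A$ is invertible. Any nonzero $u \in \ker A_1$ has even weight by the auxiliary fact, so $(u, \mathbf{0}) \in \ker A$ is a nonzero kernel vector, a contradiction; hence $A_1$ (and by symmetry $A_2$) is invertible, so both $\Gamma_1$ and $\Gamma_2$ are APR. Define $\beta_i = |A_i^{-1}\mathbf{1}| \bmod 2$; since $A_i^{-1}\mathbf{1}$ is the indicator of the unique odd dominating set of $\Gamma_i$, $\beta_i = 0$ iff $\Gamma_i$ has an odd dominating set of even cardinality. To show at least one $\beta_i$ vanishes, consider a configuration $(c_1, c_2)$ and set $\alpha_i = |A_i^{-1} c_i| \bmod 2$ and $s_i = |S_i| \bmod 2$. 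Applying $A_i^{-1}$ to the Lights Out equations reduces solvability to the two-variable linear system $s_1 = \alpha_1 + \beta_1 s_2,\ s_2 = \alpha_2 + \beta_2 s_1$. If $\beta_1 = \beta_2 = 1$, solvability forces $\alpha_1 + \alpha_2 = 0$; taking $(c_1, c_2) = (\mathbf{0}, \mathbf{1})$ gives $\alpha_1 = 0$ and $\alpha_2 = \beta_2 = 1$, so this configuration is unsolvable on the join, contradicting APR.

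For the backward direction, assume $A_1, A_2$ are invertible and, without loss of generality, $\beta_1 = 0$. The system then becomes $s_1 = \alpha_1$ and $s_2 = \alpha_2 + \alpha_1 \beta_2$, which always has a unique solution; taking $S_i = A_i^{-1}(c_i + s_{3-i}\mathbf{1})$ then produces subsets whose cardinality parities automatically match $s_i$, and $S_1 \cup S_2$ solves $(c_1, c_2)$ on $\Gamma_1 + \Gamma_2$. The main obstacle is the auxiliary fact ruling out odd-weight kernel vectors of $A_1$ and $A_2$; without it, the step deducing individual APR of the $\Gamma_i$ from APR of the join would require a delicate case analysis, but with it the theorem collapses to a clean $2 \times 2$ linear system over $\mathbb{Z}_2$.
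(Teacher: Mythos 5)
Your proof is correct, but it does not follow the paper, because the paper offers no proof at all: Theorem~\ref{thm:as} is imported verbatim from Amin and Slater \cite{AS1}, whose original argument is phrased combinatorially in the language of parity-restricted dominating sets. You instead give a self-contained linear-algebraic proof in the spirit of Theorem~\ref{thm:inv}: writing the neighborhood matrix of the join in block form with all-ones off-diagonal blocks, reducing solvability of a configuration $(c_1,c_2)$ to the $2\times 2$ system $s_1=\alpha_1+\beta_1 s_2$, $s_2=\alpha_2+\beta_2 s_1$ over $\mathbb{Z}_2$, and exploiting the key auxiliary fact that every kernel vector of a symmetric matrix with all-ones diagonal has even weight (the quadratic-form computation $u^{T}Mu\equiv|u|$ is exactly right, and it is what makes the deduction that each $A_i$ is invertible, and the choice $(c_1,c_2)=(\mathbf{0},\mathbf{1})$ witnessing failure when $\beta_1=\beta_2=1$, go through). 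All steps check out, including the verification that the candidate press sets $x_i=A_i^{-1}(c_i+s_{3-i}\mathbf{1})$ have the parities demanded by the system. What your route buys is uniformity with the rest of the paper: the same $u^{T}Mu$ identity also yields the facts quoted from Batal and from Lemma~1 of \cite{AS1} (uniqueness and parity of the odd dominating set $A_i^{-1}\mathbf{1}$), so Propositions~\ref{prop:isoedgedoesntomatter}--\ref{prop:isoverteven} could be derived from your block computation directly; what the cited combinatorial proof buys is independence from the matrix formalism and the form in which the result appears in the APR literature.
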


We will make use of Theorem \ref{thm:as} as we study graph joins in which one of the graphs has two or fewer vertices. We will start with the two vertex case and note that Corollary \ref{cor:sameadj} demonstrates that Lights Out is not universally solvable on $\Gamma_1 + \Gamma_2$ when $\Gamma_2$ has two vertices and one edge. The situation is slightly different when $\Gamma_2$ has two vertices $u$ and $v$ but no edges.

\begin{proposition}
Let $\Gamma$ be a graph and $\Gamma'$ be the graph with two vertices $u$ and $v$ and no edges. Then Lights Out is universally solvable on $\Gamma + \Gamma'$ if and only if Lights Out is universally solvable on $\Gamma$. 
\label{prop:isoedgedoesntomatter}
\end{proposition}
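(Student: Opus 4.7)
The plan is to apply Theorem \ref{thm:as} directly, which reduces the problem to verifying two properties of the two-vertex edgeless graph $\Gamma'$: namely, that $\Gamma'$ is APR, and that $\Gamma'$ has an odd dominating set of even cardinality. Since being APR is equivalent to Lights Out being universally solvable, once these two facts are established, Theorem \ref{thm:as} gives both directions of the biconditional at once.

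First I would verify that $\Gamma'$ is APR. Because the two vertices $u$ and $v$ of $\Gamma'$ are not adjacent to each other, the closed neighborhood of $u$ inside $\Gamma'$ is just $\{u\}$ and the closed neighborhood of $v$ is just $\{v\}$. Given any assignment of $0$'s and $1$'s to $u$ and $v$, the set $S \subseteq \{u,v\}$ consisting of exactly those vertices assigned $1$ clearly produces the required parities, so every assignment is realized and $\Gamma'$ is APR.

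Next I would observe that $S = \{u, v\}$ is an odd dominating set of $\Gamma'$: each of $u$ and $v$ is adjacent to or equal to exactly one element of $S$ (itself), which is odd. This set has cardinality $2$, so $\Gamma'$ has an odd dominating set of even cardinality.

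With these two facts in hand, the forward direction follows from Theorem \ref{thm:as}: if $\Gamma$ is APR, then both $\Gamma$ and $\Gamma'$ are APR and $\Gamma'$ satisfies the even-cardinality condition, so $\Gamma + \Gamma'$ is APR. The reverse direction is immediate from Theorem \ref{thm:as} as well, since if $\Gamma + \Gamma'$ is APR then in particular $\Gamma$ must be APR. I do not anticipate a genuine obstacle here; the only thing that needs care is correctly interpreting the ``adjacent to or equal to'' convention for the two isolated vertices of $\Gamma'$, which is what makes the verifications in the two preceding paragraphs work.
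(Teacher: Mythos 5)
Your proposal is correct and matches the paper's argument: the paper likewise verifies that Lights Out is universally solvable on $\Gamma'$ and that $V\Gamma' = \{u,v\}$ is an odd dominating set of cardinality $2$, then invokes Theorem \ref{thm:as} for both directions. You simply spell out the APR verification for $\Gamma'$ in more detail than the paper does.
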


\begin{proof}
Note that Lights Out is universally solvable on $\Gamma'$ and that $V\Gamma'$ is an odd neighborhood dominating set for $\Gamma'$ with cardinality 2. Hence by Theorem \ref{thm:as}, Lights Out is universally solvable on $\Gamma + \Gamma'$ if and only if Lights Out is universally solvable on $\Gamma$.
\end{proof}

For the case of joining a general graph with a one vertex graph, we make use of Corollary 1 from \cite{Batal} which states that if Lights Out is universally solvable on $\Gamma$, then $\Gamma$ has an odd dominating set with the same parity as $|V\Gamma|$. Further, Lemma 1 of \cite{AS1} states that this is the only odd dominating set of $\Gamma$. That is, if Lights Out is universally solvable on $\Gamma$ and $|V\Gamma|$ is odd, then $\Gamma$ does not have an odd dominating set of even cardinality.

\begin{proposition}
Let $\Gamma$ be a graph with $n$ vertices where $n$ is odd and consider $\Gamma + \{v\}$. Lights Out is not universally solvable on $\Gamma$.
\label{prop:isovertodd}
\end{proposition}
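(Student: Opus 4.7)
The plan is to apply Theorem~\ref{thm:as} directly to the join $\Gamma + \{v\}$ and show that at least one of its two hypotheses must fail whenever $|V\Gamma|$ is odd. (I read the statement as asserting that Lights Out is not universally solvable on $\Gamma + \{v\}$; the plain graph $\Gamma$ with $n$ odd may or may not be universally solvable, so the conclusion must refer to the join that the proposition tells us to consider.)

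First, I would split into two cases based on whether $\Gamma$ is APR. If $\Gamma$ is not APR, then clause (1) of Theorem~\ref{thm:as} already fails, so $\Gamma + \{v\}$ is not APR, i.e.\ Lights Out is not universally solvable there. This disposes of the easy case.

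For the remaining case, assume $\Gamma$ is APR. I would then invoke the two results quoted immediately before the proposition: by Corollary~1 of \cite{Batal}, $\Gamma$ has an odd dominating set whose cardinality has the same parity as $|V\Gamma|=n$; and by Lemma~1 of \cite{AS1}, this is the \emph{only} odd dominating set of $\Gamma$. Since $n$ is odd, the unique odd dominating set of $\Gamma$ has odd cardinality, so $\Gamma$ possesses no odd dominating set of even cardinality. The other summand, the single vertex $\{v\}$, has $\{v\}$ as its only odd dominating set, which has cardinality $1$, again odd. Therefore clause (2) of Theorem~\ref{thm:as} fails, and $\Gamma + \{v\}$ is not APR, i.e.\ Lights Out is not universally solvable on the join.

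There is no real obstacle here; the work has already been done in the cited results, and the argument is a direct bookkeeping of the parities in Theorem~\ref{thm:as}. The only point requiring a moment of care is recognizing that the uniqueness statement from \cite{AS1} is essential: without it one could worry that an APR graph with $n$ odd might still harbor a second odd dominating set of even cardinality, which would make clause (2) possible to satisfy.
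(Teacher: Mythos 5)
Your proposal is correct and follows essentially the same route as the paper: both apply Theorem~\ref{thm:as} and use the cited results of \cite{Batal} and \cite{AS1} to conclude that neither $\{v\}$ nor (an APR) $\Gamma$ with $n$ odd can supply an odd dominating set of even cardinality. Your explicit case split on whether $\Gamma$ is APR is a minor (and welcome) tightening of the paper's more terse argument, not a different approach.
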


\begin{proof}
Note that neither the one vertex graph $\{ v \}$ nor $\Gamma$ has an odd dominating set with even cardinality. Hence by Theorem \ref{thm:as}, Lights Out is not universally solvable on $\Gamma +\{v\}$.
\end{proof}

\begin{proposition}
Let $\Gamma$ be a graph with $n$ vertices where $n$ is even and consider $\Gamma + \{v\}$. Lights Out is universally solvable on $\Gamma + \{v\}$ if and only if Lights Out is universally solvable on $\Gamma$.
\label{prop:isoverteven}
\end{proposition}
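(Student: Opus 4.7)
The plan is to apply Theorem \ref{thm:as} with $\Gamma_1 = \Gamma$ and $\Gamma_2 = \{v\}$, following the same template as the proofs of Propositions \ref{prop:isoedgedoesntomatter} and \ref{prop:isovertodd}. First, I would record the relevant features of the one-vertex graph $\{v\}$: its neighborhood matrix is the $1 \times 1$ matrix $[1]$, which is invertible over $\mathbb{Z}_2$, so by Theorem \ref{thm:inv}, $\{v\}$ is APR. The only nonempty subset of $V\{v\}$ is $\{v\}$ itself, which is an odd dominating set of cardinality $1$; consequently, $\{v\}$ has no odd dominating set of even cardinality.

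With this in hand, Theorem \ref{thm:as} specializes to the statement that $\Gamma + \{v\}$ is APR if and only if $\Gamma$ is APR and $\Gamma$ has an odd dominating set of even cardinality. So the whole content of the proposition reduces to showing that, when $n$ is even, the second condition is redundant.

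For this final step I would invoke the Batal--Amin--Slater fact recorded in the paragraph preceding Proposition \ref{prop:isovertodd}: if Lights Out is universally solvable on $\Gamma$, then $\Gamma$ has an (automatically unique) odd dominating set whose cardinality has the same parity as $|V\Gamma| = n$. Since $n$ is assumed even, this odd dominating set has even cardinality. Therefore the APR property of $\Gamma$ already implies the existence of an odd dominating set of even cardinality, and the two conditions in Theorem \ref{thm:as} collapse to the single condition that $\Gamma$ be APR.

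I do not anticipate any real obstacle; the argument is essentially a bookkeeping exercise combining Theorem \ref{thm:as} with the cited parity result. The only subtlety is noting that the parity constraint on the odd dominating set is what distinguishes the even-$n$ case (this proposition) from the odd-$n$ case (Proposition \ref{prop:isovertodd}), and that explains why the statements differ in those two settings.
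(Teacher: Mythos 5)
Your proposal is correct and follows essentially the same route as the paper's proof: apply Theorem \ref{thm:as} with $\Gamma_2 = \{v\}$ and use the Batal parity result (odd dominating set of cardinality congruent to $|V\Gamma|$) to supply the even-cardinality odd dominating set when $n$ is even. The extra bookkeeping about $\{v\}$'s neighborhood matrix and its unique odd dominating set is a harmless elaboration of what the paper leaves implicit.
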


\begin{proof}
If Lights Out is not universally solvable on $\Gamma$, then Theorem \ref{thm:as} implies that Lights Out is not universally solvable on $\Gamma + \{v\}$. Additionally, if Lights Out is universally solvable on $\Gamma$, then, by Corollary 1 in \cite{Batal}, $\Gamma$ has an odd dominating set of even cardinality. Since Lights Out is universally solvable on the graph with one vertex, Theorem \ref{thm:as} implies that Lights Out is universally solvable on $\Gamma+ \{v\}$.
\end{proof}

\section{Complement Graphs}

To simplify our arguments and figures, we will often work with complement graphs. Recall that for a graph $\Gamma$, the \emph{complement graph}, denoted $\Gamma^C$, is the graph on the same vertex set as $\Gamma$ in which there is an edge between two vertices in $\Gamma^C$ if and only if there is no edge between those vertices in $\Gamma$.  

To start, we will rephrase some of our results from Sections 2 and 3 in terms of complement graphs. Recall that the \emph{degree} of a vertex $v$, denoted $\deg(v)$, is the number of edges incident to $v$. A vertex in $\Gamma$ is {\it isolated}, i.e. $\deg(v) = 0,$ if and only if that vertex is dominating in $\Gamma^C$. Additionally, if $u$ and $v$ are non-adjacent vertices in $\Gamma^C$ that are adjacent to each vertex in $V\Gamma^C \setminus \{u,v\}$, then $u$ and $v$ are adjacent degree 1 vertices in $\Gamma$; that is, the edge between $u$ and $v$ in $\Gamma$ is an \emph{isolated edge}. 

Now we consider Propositions \ref{prop:sameadj}, \ref{prop:isoedgedoesntomatter}, \ref{prop:isovertodd}, and \ref{prop:isoverteven}, and Corollary \ref{cor:sameadj} in the context of complement graphs. Here, $\Gamma \setminus R$ denotes the graph obtained by removing $R$ and each edge incident to an element in $R$ from $\Gamma$, where $R \subseteq V\Gamma$.

\begin{proposition}
Let $v \in V\Gamma$.
\begin{enumerate}
\item If $v$ is an isolated vertex and $\Gamma$ has an even number of vertices, then Lights Out is not universally solvable on $\Gamma^C$.
\item If $v$ is an isolated vertex and $\Gamma$ has an odd number of vertices, then Lights Out is universally solvable on $\Gamma^C$ if and only if Lights Out is universally solvable on $(\Gamma \setminus \{v\})^C$.
\item If $v$ is adjacent to two or more vertices of degree 1, then Lights Out is not universally solvable on $\Gamma^C$.
\end{enumerate}
Let $u, v \in V\Gamma$ with $u \ne v$.
\begin{enumerate}
\setcounter{enumi}{3}
\item If $u$ and $v$ are isolated vertices in $\Gamma$, then Lights Out is not universally solvable on $\Gamma^C$.
\item Suppose there is an isolated edge between $u$ and $v$ in $\Gamma$.  Then Lights Out is universally solvable on $\Gamma^C$ if and only if Lights Out is universally solvable on $(\Gamma \setminus \{u,v\})^C$.
\end{enumerate}
\label{prop:compprops}
\end{proposition}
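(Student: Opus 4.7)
The plan is to translate each hypothesis on $\Gamma$ into a structural statement about $\Gamma^C$, and then invoke the matching result from Sections 2 and 3. The useful dictionary under complementation is: an isolated vertex in $\Gamma$ is a dominating vertex in $\Gamma^C$; an isolated edge $uv$ in $\Gamma$ produces a pair $u,v$ in $\Gamma^C$ that are non-adjacent to each other but jointly adjacent to every other vertex; and a vertex $v$ of $\Gamma$ whose neighborhood contains two degree-1 vertices $u_1, u_2$ yields a pair $u_1, u_2$ in $\Gamma^C$ that are adjacent to each other and share identical neighborhoods in $V\Gamma^C \setminus \{u_1, u_2\}$ (namely $V\Gamma \setminus \{u_1, u_2, v\}$, since the only vertex either $u_i$ saw in $\Gamma$ was $v$).

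For parts (1) and (2), I would use that $v$ isolated in $\Gamma$ means $\Gamma^C = (\Gamma \setminus \{v\})^C + \{v\}$. The base graph has $|V\Gamma| - 1$ vertices, so Proposition \ref{prop:isovertodd} supplies (1) (odd base, hence no universal solvability) and Proposition \ref{prop:isoverteven} supplies (2) (even base, hence the stated equivalence).

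For parts (4) and (5), the argument is analogous. In (4) both $u$ and $v$ are dominating in $\Gamma^C$, so Corollary \ref{cor:sameadj} applies directly. In (5) the isolated edge $uv$ in $\Gamma$ gives the decomposition $\Gamma^C = (\Gamma \setminus \{u,v\})^C + \Gamma'$, where $\Gamma'$ is the two-vertex graph with no edges, and Proposition \ref{prop:isoedgedoesntomatter} yields the equivalence. Part (3) is the only one that does not factor through a join proposition; instead I would finish it by checking the common-neighbor computation above for the pair $u_1, u_2$ in $\Gamma^C$ and then invoking Proposition \ref{prop:sameadj}.

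The proposition is essentially a bookkeeping exercise, and I do not expect a serious obstacle. The only step that requires care is verifying the adjacency and common-neighborhood claims in the dictionary, especially the one used in (3): here one must simultaneously note that $u_1 u_2$ is an edge of $\Gamma^C$ (because $u_1$ and $u_2$ are non-adjacent in $\Gamma$, each having only $v$ as a neighbor) and that their neighborhoods in $\Gamma^C$ coincide on $V\Gamma^C \setminus \{u_1, u_2\}$. This amounts to a short direct check of which edges appear in $\Gamma^C$ given the described local structure in $\Gamma$.
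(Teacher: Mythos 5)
Your proposal is correct and follows essentially the same route as the paper, which cites Propositions \ref{prop:isovertodd}, \ref{prop:isoverteven}, \ref{prop:sameadj}, Corollary \ref{cor:sameadj}, and Proposition \ref{prop:isoedgedoesntomatter} for parts (1)--(5) respectively; your complementation dictionary and the join decompositions $\Gamma^C = (\Gamma \setminus \{v\})^C + \{v\}$ and $\Gamma^C = (\Gamma \setminus \{u,v\})^C + \Gamma'$ are exactly the (unstated) verifications behind the paper's ``immediate'' claim. The common-neighborhood check you carry out for part (3) is also the right one, so there is nothing to add.
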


The proofs of the five statements in Proposition \ref{prop:compprops} are immediate from Proposition \ref{prop:isovertodd}, Proposition \ref{prop:isoverteven}, Proposition \ref{prop:sameadj}, 
Corollary \ref{cor:sameadj}, and Proposition \ref{prop:isoedgedoesntomatter} respectively.

To conclude this section, we prove two lemmas that will be used in example computations in Section 5.

\begin{lemma}
Suppose that $\Gamma$ has a vertex $v$ of degree $d \geq 3$. Then if $\Gamma$ has $d-1$ or fewer vertices of degree 2 or greater, then Lights Out is not universally solvable on $\Gamma^C$.
\label{lem:adjdegree}
\end{lemma}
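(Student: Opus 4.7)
The plan is to reduce the lemma to part (3) of Proposition \ref{prop:compprops}, which states that $\Gamma^C$ fails to be universally solvable whenever some vertex of $\Gamma$ is adjacent to at least two vertices of degree $1$. So my goal becomes: under the stated hypotheses on $v$, exhibit at least two degree-$1$ neighbors of $v$ in $\Gamma$.

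The argument will be a short counting step. First I note that, since $d \geq 3 \geq 2$, the vertex $v$ itself is counted among the at most $d-1$ vertices of $\Gamma$ having degree $\geq 2$. Consequently, among the $d$ neighbors of $v$ — all of which are distinct from $v$ — at most $(d-1)-1 = d-2$ can have degree $\geq 2$. That leaves at least $d-(d-2)=2$ neighbors of $v$ with degree strictly less than $2$. But each neighbor of $v$ has degree at least $1$ by virtue of being adjacent to $v$, so these (at least two) neighbors have degree exactly $1$. Applying Proposition \ref{prop:compprops}(3) then finishes the proof.

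The main obstacle is minor: one must remember to subtract $v$ itself from the tally of vertices of degree $\geq 2$, which is precisely the reason the hypothesis permits $d-1$ such vertices rather than $d-2$. With that bookkeeping in place, the lemma is immediate.
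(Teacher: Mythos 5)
Your proof is correct and matches the paper's argument, which likewise invokes Proposition \ref{prop:compprops}(3) after observing that at least two of $v$'s $d$ neighbors must have degree $1$; you have simply spelled out the counting step (subtracting $v$ itself from the vertices of degree $\geq 2$) that the paper leaves implicit.
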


\begin{proof}
Note that $v$ is adjacent to $d$ vertices in $\Gamma$ and at least two of those vertices have degree 1. By Proposition \ref{prop:compprops} (3), Lights Out is not universally solvable on $\Gamma^C$.
\end{proof}

\begin{lemma}
Suppose $\Gamma$ has a 4-cycle containing distinct non-adjacent degree 2 vertices. Then Lights Out is not universally solvable on $\Gamma^C$.
\label{prop:no4cycle}
\end{lemma}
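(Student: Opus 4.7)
The plan is to apply Proposition \ref{prop:sameadj} to the complement graph $\Gamma^C$, by exhibiting two vertices in $\Gamma^C$ that are adjacent and have identical neighborhoods outside themselves.

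First, I would name the 4-cycle explicitly: let $a, b, c, d$ be its vertices in cyclic order, so that $ab, bc, cd, da$ are edges of $\Gamma$. The hypothesis of distinct non-adjacent degree 2 vertices in this cycle means, up to relabeling, that $a$ and $c$ both have degree 2 in $\Gamma$. Since $a$ is already adjacent to $b$ and $d$ via the cycle and has degree only 2, its entire neighborhood in $\Gamma$ is $\{b, d\}$; by the same reasoning, the neighborhood of $c$ in $\Gamma$ is also $\{b, d\}$. In particular, $a$ and $c$ are non-adjacent in $\Gamma$.

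Next, I would translate this to $\Gamma^C$. Because $a$ and $c$ are non-adjacent in $\Gamma$, they are adjacent in $\Gamma^C$. For any other vertex $w \in V\Gamma \setminus \{a, b, c, d\}$, neither $aw$ nor $cw$ is an edge of $\Gamma$ (since the neighborhoods of $a$ and $c$ are exactly $\{b,d\}$), so both $aw$ and $cw$ are edges of $\Gamma^C$. Meanwhile, $ab$ and $cb$ are edges of $\Gamma$, so neither is an edge of $\Gamma^C$, and similarly for $d$. Therefore $a$ and $c$ have exactly the same set of neighbors in $V\Gamma^C \setminus \{a,c\}$, namely $V\Gamma \setminus \{a,b,c,d\}$. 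Proposition \ref{prop:sameadj} then yields that Lights Out is not universally solvable on $\Gamma^C$.

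There is no real obstacle here; the only thing to be careful about is ensuring that the two degree 2 vertices are the non-adjacent pair in the 4-cycle (i.e.\ opposite vertices), not an adjacent pair, so that they share both of their neighbors. The hypothesis guarantees exactly this, and the rest is a direct unpacking of what complementation does to a 4-cycle with two opposite low-degree vertices.
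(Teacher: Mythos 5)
Your proof is correct and follows exactly the paper's own argument: the two non-adjacent degree 2 vertices of the 4-cycle become, in $\Gamma^C$, adjacent vertices with identical neighborhoods outside themselves, so Proposition \ref{prop:sameadj} applies. You have simply spelled out in full the verification that the paper leaves implicit.
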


\begin{proof}
Let $u$ and $v$ be the non-adjacent degree 2 vertices. Then, in $\Gamma^C$, $u$ and $v$ satisfy the hypotheses of Proposition \ref{prop:sameadj}.
\end{proof}

\section{Excess Degree}

In this section, we introduce the concept of excess degree and leverage this to understand the universal solvability of Lights Out on nearly complete graphs.

\begin{definition}
For a graph $\Gamma$, the \emph{excess degree} of $\Gamma$, denoted $\edeg(\Gamma)$, is given by
$$\edeg(\Gamma) = i - n + \displaystyle \sum_{v \in V\Gamma} \deg(v),$$
where $i$ is the number of isolated vertices of $\Gamma$ and $n = |V\Gamma|$.
\end{definition}

To understand excess degree, consider a graph $\Gamma$ with $\edeg(\Gamma) = 0$. Each non-isolated vertex in $\Gamma$ has degree 1, and so each connected component of $\Gamma$ is an isolated edge or an isolated vertex. For a graph $\Gamma$ with $m$ non-isolated vertices and $\edeg(\Gamma) = d$, the degrees of the non-isolated vertices of $\Gamma$ form an integer partition of $m+d$ into $m$ terms. That is, a graph with $\edeg(\Gamma) = d$ has, among the non-isolated vertices, $d$ more adjacencies than are minimally required. The extra adjacency could be spread out to at most $d$ vertices; in a graph with excess degree $d$, there are at most $d$ vertices with degree greater than 1. The other extreme case is for all of that extra adjacency to be concentrated in a single vertex; the maximum degree of a vertex in a graph of excess degree $d$ is $d+1$. Lastly, since $m+d$ is the sum of the degrees of the vertices of $\Gamma$, the integers $m$ and $d$ must have the same parity. 

\begin{definition}
Let $\mathcal{E}^n_d$ be the set of graphs $\Gamma$ with $n$ vertices and excess degree $d$ for which Lights Out is universally solvable on $\Gamma^C$. We denote $|\mathcal{E}^n_d| = E^n_d$.
\end{definition}

By Proposition \ref{prop:compprops} (4), each $\Gamma \in \mathcal{E}^n_d$ has no more than 1 isolated vertex. In particular, $\Gamma$ has no isolated vertices if and only if $n$ and $d$ have the same parity. The following proposition shows that when $\Gamma$ has an isolated vertex, $n$ is odd and $d$ is even.

\begin{proposition}
Let $n$ be even and $d$ be odd. Then $E^n_d = 0$.
\label{prop:edegparity}
\end{proposition}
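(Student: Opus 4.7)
The plan is to argue that if $n$ is even and $d$ is odd, then any graph $\Gamma$ with $n$ vertices and $\edeg(\Gamma) = d$ must have either $0$ isolated vertices or at least $2$ isolated vertices; in the former case Proposition \ref{prop:compprops} (1) rules out universal solvability on $\Gamma^C$, and in the latter case Proposition \ref{prop:compprops} (4) rules it out. Either way, $\mathcal{E}^n_d$ is empty.

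The first step is the parity observation already made in the discussion preceding the proposition: if $m$ denotes the number of non-isolated vertices of $\Gamma$, then the sum of the degrees of $\Gamma$ equals $m+d$, so $m$ and $d$ must have the same parity. Combined with $i = n - m$, this means that when $n$ is even and $d$ is odd, $m$ is odd, and hence $i = n - m$ is odd. In particular, $i \geq 1$.

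Now I would split on the value of $i$. If $i \geq 2$, then $\Gamma$ has at least two isolated vertices, so by Proposition \ref{prop:compprops} (4), Lights Out is not universally solvable on $\Gamma^C$. The only remaining case, since $i$ is odd and positive, is $i = 1$. But then $\Gamma$ has an isolated vertex and an even number of vertices, so Proposition \ref{prop:compprops} (1) directly gives that Lights Out is not universally solvable on $\Gamma^C$. In either case $\Gamma \notin \mathcal{E}^n_d$, so $E^n_d = 0$.

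No step looks like a genuine obstacle; the proof is essentially a one-line parity check followed by two appeals to Proposition \ref{prop:compprops}. The only thing to be careful about is remembering that the parity constraint on $m$ and $d$ forces $i$ to be odd (not merely nonzero), which is what lets us conclude $i = 1$ in the second case rather than having to handle $i = 0$ separately.
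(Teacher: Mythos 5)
Your proof is correct and follows essentially the same route as the paper: the parity of the degree sum forces an isolated vertex, and then Proposition \ref{prop:compprops} (1) rules out universal solvability on $\Gamma^C$ since $n$ is even. Your case split is harmless but unnecessary, since Proposition \ref{prop:compprops} (1) already applies whenever $i \geq 1$ and $n$ is even, regardless of whether $i = 1$ or $i \geq 2$.
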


\begin{proof}
To the contrary, suppose $\Gamma \in \mathcal{E}^n_d$ and note that $\Gamma$ has an isolated vertex. By Proposition \ref{prop:compprops} (1), Lights Out is not universally solvable on $\Gamma^C$, contradicting our assumption.
\end{proof}

After accounting for parity, the proposition below demonstrates that the value $E^n_d$ stabilizes as $n$ approaches $\infty$.

\begin{proposition}
Suppose $n$ and $d$ are positive integers with $n \geq 3d$. Then
$$E^n_d =
\begin{cases}
E^{3d}_{d} & \text{ if } \, n \, \text{ and } \, d \, \text{ have the same parity or if } \, n \, \text{ is odd,}\\[2 pt]
0 & \text{ otherwise.}
\end{cases}$$
\label{prop:excessdegntoinfty}
\end{proposition}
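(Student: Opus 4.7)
The second branch (where $n$ is even and $d$ is odd) follows immediately from Proposition \ref{prop:edegparity}, so my plan concentrates on the first branch. I would split this into Case A ($n$ and $d$ of the same parity) and Case B ($n$ odd with $d$ even), and exhibit an explicit bijection $\phi \colon \mathcal{E}^n_d \to \mathcal{E}^{3d}_d$ based on a canonical ``core'' of each graph.

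Given $\Gamma$, let $\Gamma_\circ$ be the \emph{core} obtained by deleting every isolated vertex of $\Gamma$ together with both endpoints of every isolated edge. Since isolated vertices and isolated edges are themselves connected components of $\Gamma$, this decomposition is unique. A direct check shows neither deletion step alters the excess degree, so $\edeg(\Gamma_\circ) = d$, and writing $m_\circ = |V\Gamma_\circ|$, the absence of isolated vertices forces the degree sum to be $m_\circ + d$ and hence $m_\circ \equiv d \pmod 2$. A pigeonhole argument then yields $m_\circ \leq 3d$: letting $k$ denote the number of vertices of $\Gamma_\circ$ of degree at least $2$, the degree-sum inequality gives $k \leq d$, and since $\Gamma_\circ$ has no isolated edges each of the $m_\circ - k$ degree-$1$ vertices must attach to one of those $k$ vertices, yielding $m_\circ - k \leq k + d$.

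Next, combining parts (1), (2), (4), (5) of Proposition \ref{prop:compprops}, I would show that $\Gamma^C$ is universally solvable iff $\Gamma$ has at most one isolated vertex (accompanied by $n$ odd when it has one) and $\Gamma_\circ^C$ is universally solvable. The identity $n = m_\circ + 2 k_{\mathrm{ie}} + i$, where $k_{\mathrm{ie}}$ is the number of isolated edges and $i \in \{0,1\}$ the number of isolated vertices, together with $m_\circ \equiv d \pmod 2$, then forces $i = 0$ in Case A and $i = 1$ in Case B. Consequently, once the core of $\Gamma$ is specified, the remainder of $\Gamma$ is uniquely determined up to isomorphism by padding with the forced number of isolated edges plus possibly one isolated vertex. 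The map $\phi$ simply replaces $\Gamma$ by $\Gamma_\circ$ together with $(3d - m_\circ)/2$ isolated edges (the target always lands in Case A since $3d \equiv d \pmod 2$, so no isolated vertex is adjoined); by construction $\phi(\Gamma) \in \mathcal{E}^{3d}_d$.

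Verifying that $\phi$ is a bijection amounts to noting that the core operation inverts the padding, and that for the inverse direction $n$ must be large enough to accommodate the padding back up from $m_\circ$ to $n$: in Case A this is immediate from $n \geq 3d \geq m_\circ$, while in Case B the inequalities $n$ odd and $3d$ even force $n \geq 3d + 1$, which is exactly what is required to fit both $(n - 1 - m_\circ)/2$ isolated edges and the isolated vertex. The principal obstacle is the parity bookkeeping in paragraph three: correctly pinning down $i$ from the parities of $n$, $d$, and $m_\circ$ in each case, and confirming that the core-to-full-graph extension is exactly one-to-one on unlabeled graphs.
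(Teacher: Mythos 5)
Your proposal is correct and is essentially the paper's own argument: the same counting bound (at most $d$ vertices of degree at least $2$, hence at most $3d$ vertices outside the isolated edges/vertex), the same use of Proposition \ref{prop:compprops} (1), (2), (4), (5), and the same bijection given by stripping/adding isolated edges and, in the odd-$n$, even-$d$ case, the single isolated vertex. Your ``core'' formulation merely packages the paper's removal of exactly $\frac{n-3d}{2}$ (or $\frac{n-1-3d}{2}$) isolated edges in a canonical way, which makes well-definedness slightly more transparent but is not a different route.
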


\begin{proof}
Let $\Gamma \in \mathcal{E}^n_d$. We will remove a collection of isolated edges and all isolated vertices from $\Gamma$ to create a graph $\Gamma' \in \mathcal{E}^{3d}_d$. This construction gives a one-to-one correspondence between the sets of graphs and, with Proposition \ref{prop:compprops} (2), establishes the result.

Note that by Proposition \ref{prop:compprops} (4), $\Gamma$ has 0 isolated vertices when $n$ and $d$ are of the same parity or 1 isolated vertex when $n$ and $d$ have opposite parities. Further, $\Gamma$ can have at most $d$ vertices with degree greater than 1. Consider the case when the parities of $n$ and $d$ agree, and note there are at least $n-d$ vertices of degree 1 and at most $2d$ of these can be adjacent to a vertex of degree greater than 1. That is, there are at least $n-3d$ vertices that are incident to an isolated edge, and hence at least $\frac{n-3d}{2}$ isolated edges. We remove $\frac{n-3d}{2}$ isolated edges from $\Gamma$ and call the resulting graph $\Gamma'$. Note that $\Gamma'$ has $3d$ vertices and excess degree $d$, and that $\Gamma'$ is independent of the specific $\frac{n-3d}{2}$ isolated edges chosen from $\Gamma$. By Proposition \ref{prop:compprops} (5), Lights Out is universally solvable on $\Gamma'^C$. That is, we have a well-defined map from $\mathcal{E}^n_d$ to $\mathcal{E}^{3d}_d$. This map is bijective; adding $\frac{n-3d}{2}$ isolated edges to any graph $\Gamma' \in \mathcal{E}^{3d}_d$ recovers $\Gamma$.

The same argument applies when $n$ and $d$ are of opposite parities. There are at least $n-1-d$ vertices of degree 1, and at most $2d$ of these can be adjacent to a vertex of degree greater than 1; there are at least $\frac{n-1-3d}{2}$ isolated edges. Removing those edges from $\Gamma$ gives $\Gamma' \in \mathcal{E}^{3d+1}_d$ as Lights Out is  universally solvable on $\Gamma'^C$ by Proposition \ref{prop:compprops} (5). Removing $\frac{n-1-3d}{2}$ isolated edges gives a one-to-one correspondence between $\mathcal{E}^n_d$ to $\mathcal{E}^{3d+1}_d$. Now, when $d$ is odd, Proposition \ref{prop:edegparity} shows that $E^{3d+1}_d = 0$.  When $d$ is even, Proposition \ref{prop:compprops} (2) shows that removing the isolated vertex $v$ gives $\Gamma \setminus \{v\} \in E^{3d}_{d}.$ This gives an invertible map from $\mathcal{E}^{3d+1}_{d}$ to $\mathcal{E}^{3d}_d$; adding an isolated vertex to a graph in $\mathcal{E}^{3d}_d$ is the inverse function. Hence, when $d$ is even and $n$ is odd, $E^n_d = E^{3d+1}_d = E^{3d}_d$.
\end{proof}

\begin{remark}
Proposition \ref{prop:excessdegntoinfty} extends to the cases when $n$ or $d = 0$.  When $n = 0$ it follows that $d$ is necessarily 0 and $\Gamma$ is the empty graph; we regard Lights Out as vacuously universally solvable on $\Gamma$. Hence $E^0_0 = 1$.

Rather than rely on the $n = 0$ case, suppose $\Gamma$ has $n > 0$ vertices and excess degree $d = 0$. Note that $\Gamma$ is a collection of isolated edges when $n$ is even and that same graph with an isolated vertex when $n$ is odd. When $n$ is even, removing all but one isolated edge results in Graph 1 of Figure \ref{fig:graphs}, while when $n$ is odd removing every isolated edge results in a graph with a single vertex. In both cases, the complement is a graph with no edges, and hence Lights Out is universally solvable on the complement and $\Gamma \in \mathcal{E}^n_0$. That is, $E^n_0 = 1$ for each non-negative integer $n$, extending Proposition \ref{prop:excessdegntoinfty}.
\label{rem:edeg0}
\end{remark}

\begin{remark}
Consider $\Gamma \in \mathcal{E}^{3d}_d$. This graph has 0 isolated vertices, and the degrees of the vertices of $\Gamma$ are a partition of $3d + d$ into $3d$ positive integers. Such integer partitions give a setwise partition of $\mathcal{E}^{3d}_d$.  In particular, let $\mathcal{Q}_d$ be the set of integer partitions of $4d$ into a sum of $3d$ positive integers, and let $Q_d = |\mathcal{Q}_d|$. We write $p \in \mathcal{Q}_d$ as $p = [p_1, p_2, \ldots, p_{4d}]$ where 
$$\displaystyle \sum_{j=1}^{4d} jp_j = 4d, \;\;\;\; \;\;\; \displaystyle \sum_{j=1}^{4d} p_j = 3d,$$ and $p_j$ is a non-negative integer for each $j$. Then let $\mathcal{S}_{p}$ be the set of graphs with $3d$ vertices and $p_j$ vertices of degree $j$ for each $j$ and let $S_p = |\mathcal{S}_p|$. Then $$E^{3d}_{d} \leq \displaystyle \sum_{p \in \mathcal{Q}_d} S_p.$$ Note that for each fixed $d$, the value $Q_d$ is finite and, for $p \in \mathcal{Q}_d$, the value $S_p$ is also finite. For a fixed value $d$, these observations together with Proposition \ref{prop:excessdegntoinfty} demonstrate that as $n$ approaches $\infty$, the value of $E^n_d$ is finite and depends only on the parity of $n$. 
\label{rem:Nndfinite}
\end{remark}

In light of Propositions \ref{prop:edegparity} and \ref{prop:excessdegntoinfty} and Remark \ref{rem:Nndfinite}, we will consider $E^{3d}_d$ when $1 \leq d \leq 5$. 

\begin{example}
Let $\Gamma \in \mathcal{E}^3_1$. Note that one vertex of $\Gamma$ has degree 2 while the other two have degree 1. By Proposition \ref{prop:compprops} (3), Lights Out is not universally solvable on $\Gamma^C$. Hence $\mathcal{E}^3_1 = \emptyset$ and $E^3_1 = 0$. Note that $E^4_1 = 0$ by Proposition \ref{prop:edegparity} and so $E^n_1 = 0$ for each $n \geq 3$.
\label{ex:edeg1}
\end{example}

\begin{example}
Let $\Gamma \in \mathcal{E}^6_2$. Observe that $\Gamma$ cannot have a vertex of degree 3 by Lemma \ref{lem:adjdegree}. In the case where every vertex of $\Gamma$ has degree 2 or less, we see that $\Gamma$ has exactly two vertices of degree 2. These vertices must be in the same connected component of $\Gamma$ by Proposition \ref{prop:compprops} (3). Hence $\Gamma$ is uniquely determined and is Graph 2 of Figure \ref{fig:graphs} and Lights Out is indeed universally solvable on $\Gamma^C$. Thus $E^n_2 = 1$ for all $n \geq 6$ by Proposition \ref{prop:excessdegntoinfty}.
\label{ex:edeg2}
\end{example}

\begin{example}
Let $\Gamma \in \mathcal{E}^{9}_3$. Note that $\Gamma$ cannot have a vertex of degree 3 or greater by Lemma \ref{lem:adjdegree}. It follows that every vertex of $\Gamma$ has degree 2 or less and $\Gamma$ has exactly three vertices of degree 2. All three of these degree 2 vertices are in the same connected component by Proposition \ref{prop:compprops} (3). This leaves Graphs 3 and 4 in Figure \ref{fig:graphs} as the only possibilities for $\Gamma$ and for each of these Lights Out is universally solvable on $\Gamma^C$. Hence $E^n_3 = 2$ if $n$ is odd and, by Proposition \ref{prop:edegparity}, $E^n_3 = 0$ if $n$ is even. 
\label{ex:edeg3}
\end{example}

\begin{figure}[ht]
\centering
\begin{picture}(300, 250)
\put(10,0){\includegraphics[scale = 0.52]{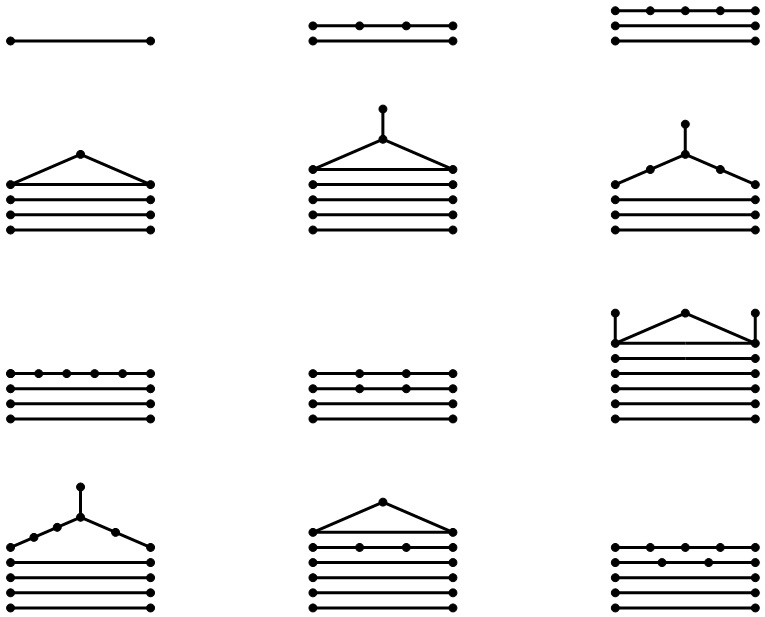}}

\put(2,225){1}
\put(120,225){2}
\put(238,225){3}

\put(2,156){4}
\put(120,156){5}
\put(238,156){6}

\put(2,83){7}
\put(120,83){8}
\put(238,83){9}

\put(-2,10){10}
\put(116,10){11}
\put(234,10){12}

\end{picture}
\caption{Graph 1 is the only graph in $\mathcal{E}^2_0$ while Graphs 2 through 12 are all of the graphs in $\mathcal{E}^{3d}_d$ where $1 \leq d \leq 5$. See Remark \ref{rem:edeg0} and Examples \ref{ex:edeg1}, \ref{ex:edeg2}, \ref{ex:edeg3}, \ref{ex:edeg4}, and \ref{ex:edeg5} for more discussion.}
\label{fig:graphs}
\end{figure}

\begin{example}
Let $\Gamma \in \mathcal{E}^{12}_4$. Note that $\Gamma$ cannot have a degree 4 vertex and cannot have two vertices of degree 3 by Lemma \ref{lem:adjdegree}. Now suppose $\Gamma$ has exactly one degree 3 vertex and all other vertices have degree less than 3. This means that $\Gamma$ has a vertex of degree 3 and two vertices of degree 2. In order for Lights Out to be universally solvable on $\Gamma^C$, the vertex of degree 3 must be adjacent to each of the vertices of degree 2; otherwise, Proposition \ref{prop:compprops} (3) applies. There are two graphs meeting this description, and they are Graphs 5 and 6 in Figure \ref{fig:graphs} and Lights Out is universally solvable on each corresponding $\Gamma^C$.

When $\Gamma$ has no vertices of degree 3 or greater, it must have exactly four vertices of degree 2. Each degree 2 vertex must be adjacent to another degree 2 vertex by Proposition \ref{prop:compprops} (3). There are two options; either all four degree 2 vertices are in the same connected component of $\Gamma$ or the degree 2 vertices reside in one of two connected components with each component containing exactly two degree 2 vertices. In that first case, note that the four vertices of degree 2 cannot form a 4-cycle by Lemma \ref{prop:no4cycle}. This limits the possible graphs in $\mathcal{E}^{12}_4$ with no vertices of degree 3 or greater to two possible graphs. These are Graphs 7 and 8 in Figure \ref{fig:graphs}. Lights Out is universally solvable on each corresponding $\Gamma^C$. Hence, $E^n_4 = 4$ for all $n \geq 12$ by Proposition \ref{prop:excessdegntoinfty}.
\label{ex:edeg4}
\end{example}

\begin{example}
Let $\Gamma \in \mathcal{E}^{15}_5$. Note that $\Gamma$ cannot have a vertex of degree 4 or greater by Lemma \ref{lem:adjdegree}. Now consider the case when $\Gamma$ has two vertices $u$ and $v$ of degree 3. Then $\Gamma$ has one vertex $w$ of degree 2. By Proposition \ref{prop:compprops} (3), each of $u$ and $v$ is adjacent to at most one vertex of degree 1. Hence $u$, $v$, and $w$ are pairwise adjacent. This uniquely determines $\Gamma$, which is Graph 9 of Figure \ref{fig:graphs}. Lights Out is universally solvable on $\Gamma^C$.

Now suppose $\Gamma$ has exactly one vertex $v$ of degree 3 and, in this case, $\Gamma$ has three vertices of degree 2. At least two of these degree 2 vertices are adjacent to $v$ by Proposition \ref{prop:compprops} (3). Additionally, no connected component of $\Gamma$ contains exactly one vertex of degree 2 and two vertices of degree 1 by Proposition \ref{prop:compprops} (3). It follows that all vertices of degrees 2 or 3 are in the same connected component of $\Gamma$. Note that the four vertices of degrees 2 and 3 cannot form a 4-cycle by Lemma \ref{prop:no4cycle}. Now, there are three possible graphs $\Gamma$ remaining in this case for which Lights Out could be universally solvable on $\Gamma^C$. Only one of these three complement graphs has an invertible neighborhood matrix; the graph $\Gamma$ corresponding to that complement graph is Graph 10 in Figure \ref{fig:graphs}.

It only remains to consider the case in which $\Gamma$ has no vertices of degree greater than 2, and so $\Gamma$ has exactly 5 vertices of degree 2. By Proposition \ref{prop:compprops} (3), there is no connected component of $\Gamma$ that contains exactly one vertex of degree 2. This means that either (i) all of the degree 2 vertices are in the same connected component of $\Gamma$ or (ii) three of the degree 2 vertices share a connected component of $\Gamma$ and the other two degree 2 vertices are together in another connected component of $\Gamma$. In case (i), there are two possible graphs $\Gamma$: a graph with a 5-cycle of degree 2 vertices and a graph with a connected component that is a length 6 edge path. For each of these graphs, Lights Out is not universally solvable on $\Gamma^C$. In case (ii), there are again two possible graphs $\Gamma$: a graph with a 3-cycle of degree 2 vertices and a graph with a connected component that is a length 4 edge path. For each of these graphs, Lights Out is universally solvable on $\Gamma^C$. These are Graphs 11 and 12 in Figure \ref{fig:graphs}. Hence $E^n_5 = 4$ if $n$ is odd and $n \geq 15$. Completing this discussion, $E^n_5 = 0$ when $n$ is even by Proposition \ref{prop:edegparity}.
\label{ex:edeg5}
\end{example}

\section{Results by number of edges}

In this section, we examine graphs with $n$ vertices and $N - \lfloor \frac{n}{2} \rfloor - m$ edges. Let $\mathcal{U}^n_m$ be the set of such graphs for which Lights Out is universally solvable and let $|\mathcal{U}^n_m| = U^n_m$.

\begin{proposition}
Let $m$ be a fixed positive integer and $n \geq 6m$. Then
$$U^n_m =
\begin{cases}
E^{6m}_{2m} & \text{ if } \, n \, \text{ is even, and}\\[2 pt]
E^{6m-3}_{2m-1} + E^{6m}_{2m} & \text{ if } \, n \, \text{ is odd.}
\end{cases}
$$
\label{prop:numofedges}
\end{proposition}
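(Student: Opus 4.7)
The plan is to translate the statement to the complement side, where Section 5 gives direct tools. Given $\Gamma \in \mathcal{U}^n_m$, the complement $\Gamma^C$ has $n$ vertices and $N - (N - \lfloor n/2 \rfloor - m) = \lfloor n/2 \rfloor + m$ edges, and Lights Out is universally solvable on $(\Gamma^C)^C = \Gamma$. Thus $\Gamma \mapsto \Gamma^C$ is a bijection from $\mathcal{U}^n_m$ onto the set of graphs with $n$ vertices and $\lfloor n/2 \rfloor + m$ edges that lie in $\mathcal{E}^n_d$ for some value of $d$.

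Next, I would compute the excess degree of $\Gamma^C$ in terms of the number $i$ of isolated vertices. Since the sum of degrees in $\Gamma^C$ is $2\lfloor n/2 \rfloor + 2m$, the definition gives
$$\edeg(\Gamma^C) = i - n + 2\lfloor n/2 \rfloor + 2m = \begin{cases} i + 2m & \text{if } n \text{ is even,} \\ i + 2m - 1 & \text{if } n \text{ is odd.} \end{cases}$$
By Proposition \ref{prop:compprops} (4), $\Gamma^C$ can have at most one isolated vertex, so $i \in \{0, 1\}$. By Proposition \ref{prop:compprops} (1), when $n$ is even the case $i = 1$ is forbidden, so $i = 0$ there. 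This partitions $\mathcal{U}^n_m$ (via the complement bijection) into one set when $n$ is even and into two disjoint sets when $n$ is odd, indexed by the value of $i$.

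Then I would invoke Proposition \ref{prop:excessdegntoinfty} to stabilize the counts. In the even case, $i = 0$ forces $d = 2m$, and with $n$ and $d$ both even and $n \geq 6m = 3d$, the proposition gives $E^n_{2m} = E^{6m}_{2m}$, yielding the first branch. In the odd case, $i = 0$ gives $d = 2m - 1$, with $n$ and $d$ both odd and $n \geq 6m > 6m - 3 = 3d$, so $E^n_{2m-1} = E^{6m-3}_{2m-1}$; and $i = 1$ gives $d = 2m$, with $n$ odd, $d$ even, and $n \geq 6m = 3d$, so again $E^n_{2m} = E^{6m}_{2m}$. Summing these disjoint contributions gives $U^n_m = E^{6m-3}_{2m-1} + E^{6m}_{2m}$.

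The argument is essentially a bookkeeping exercise once the complement-graph framework of Section 5 is in place; no new technical ideas are required. The only place that needs care is verifying the hypotheses of Proposition \ref{prop:excessdegntoinfty} case by case, in particular checking that the parity condition ``$n$ and $d$ have the same parity or $n$ is odd'' holds in exactly the three surviving cases and fails in the suppressed one. The bound $n \geq 6m$ is precisely what is needed to ensure $n \geq 3d$ for the largest relevant value $d = 2m$, so no stronger hypothesis on $n$ is required.
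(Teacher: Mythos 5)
Your proposal is correct and follows essentially the same route as the paper: pass to the complement, note that it has $\lfloor n/2 \rfloor + m$ edges and at most one isolated vertex, compute the excess degree in each parity/isolated-vertex case (ruling out the even-$n$, isolated-vertex case), and then stabilize via Proposition \ref{prop:excessdegntoinfty}. The only cosmetic difference is that you invoke Proposition \ref{prop:compprops} (1) directly where the paper cites Proposition \ref{prop:edegparity}, which amounts to the same thing.
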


\begin{proof}
Let $\Gamma$ be a graph such that $\Gamma^C \in \mathcal{U}^n_m$. Then $\Gamma$ has $n$ vertices, $\lfloor \frac{n}{2} \rfloor + m$ edges, and either 0 or 1 isolated vertices. We consider cases based on the parity of $n$ and the number of isolated vertices.

When $n$ is even, $\Gamma$ has total degree $n + 2m$.  Now if $\Gamma$ has no isolated vertices, $\edeg(\Gamma) = 2m$ and $\Gamma \in \mathcal{E}^n_{2m}$. If $\Gamma$ has an isolated vertex, then $\edeg(\Gamma) = 2m+1$ is odd and no such $\Gamma$ exists by Proposition \ref{prop:edegparity}. Hence $U^n_m = E^n_{2m}$ when $n$ is even and Proposition \ref{prop:excessdegntoinfty} gives the result.

When $n$ is odd, $\Gamma$ has total degree $n-1 + 2m$. Now if $\Gamma$ has no isolated vertices, $\edeg(\Gamma) = 2m-1$ and $\Gamma \in \mathcal{E}^n_{2m-1}$. If $\Gamma$ has an isolated vertex, then $\edeg(\Gamma) = 2m$ and $\Gamma \in \mathcal{E}^n_{2m}$. Hence $U^n_m = E^n_{2m-1} + E^n_{2m}$ when $n$ is odd and Proposition \ref{prop:excessdegntoinfty} completes the proof.
\end{proof}

\begin{remark}
Proposition \ref{prop:numofedges} can be extended to the case when $m = 0$, as $U^n_0 = E^n_0$ for each $n$. By Remark \ref{rem:edeg0} and Example \ref{ex:edeg1}, $U^n_0 = 1$ for all positive integers $n$; that is, for each $n$ there is 1 graph with $n$ vertices and $N - \lfloor \frac{n}{2} \rfloor$ edges on which Lights Out is universally solvable.
\label{rem:u0case}
\end{remark}

Together with the results in Examples \ref{ex:edeg1}, \ref{ex:edeg2}, \ref{ex:edeg3}, \ref{ex:edeg4}, and \ref{ex:edeg5}, Proposition \ref{prop:numofedges} allows us to compute the number of graphs with $n$ vertices and $N - \lfloor \frac{n}{2} \rfloor - 1$ edges or $N - \lfloor \frac{n}{2} \rfloor - 2$ edges on which Lights Out is universally solvable. 

\begin{example}
For $n \geq 6$, if $n$ is even there is $E^6_2 = 1$ graph with $n$ vertices and $N - \lfloor \frac{n}{2} \rfloor - 1$ edges on which Lights Out is universally solvable. If $n$ is odd there is also only $E^{3}_1 + E^6_2 = 1$ such graph.  
\label{ex:mequals1}
\end{example}

\begin{example}
For $n \geq 12$, if $n$ is even there are $E^{12}_4 = 4$ graphs with $n$ vertices and $N - \lfloor \frac{n}{2} \rfloor - 2$ edges on which Lights Out is universally solvable. If $n$ is odd there are $E^{9}_3 + E^{12}_4 = 6$ such graphs.
\label{ex:mequals2}
\end{example}

\begin{remark}
The inequalities bounding $n$ in Examples \ref{ex:mequals1} and \ref{ex:mequals2} can be improved by considering the specific graphs in Figure \ref{fig:graphs} that are in $E^n_2$, $E^n_3$, $E^n_4$, and $E^n_5$.  In particular, $U^n_1 = 1$ for all $n \geq 4$ and if $n \geq 8$, we have $U^n_2=4$ when $n$ is even and $U^n_2=6$ when $n$ is odd.
\label{rem:ulowcases}
\end{remark}

\begin{remark}
Let $l$ be odd and let $\Gamma_l$ be the length $l$ edge path.  The adjacency matrix of $\Gamma_l^C$ is invertible and Lights Out is universally solvable on $\Gamma_l^C$. By Proposition \ref{prop:compprops} (2), adding a collection of isolated edges to $\Gamma_l$ maintains the universal solvability of Lights Out on the complement graph. Now let $m \geq 3$ and $n \geq 6m$. By Proposition \ref{prop:numofedges}, $U^n_m > 0$. To see this, note that the graph $\Gamma_{2m+1}$ with $2m-1$ isolated edges is in $\mathcal{E}^{6m}_{2m}$.
\label{rem:unonzero}
\end{remark}

\section{Probabilistic  Results}

Now we can state and prove our main theorem. Recall that $P_{n,e}$ is the probability that when a graph is chosen uniformly at random from $\mathcal{G}_{n,e}$, the set of graphs with $n$ vertices and $e$ edges, the resulting game of Lights Out is universally solvable.

\begin{theorem}
Suppose $m$ is a positive integer. There exists a positive integer $M$ so that for all $n > M$ and all $e$ with $$N - \left\lfloor \frac{n}{2} \right\rfloor - m \leq e \leq N,$$ the probability $P_{n,e}$ is maximized when $e = N - \lfloor \frac{n}{2} \rfloor.$  
\label{thm:main}
\end{theorem}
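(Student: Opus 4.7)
The plan is to translate the theorem into a direct comparison between the number of universally solvable graphs at each admissible edge count and the total number of graphs with that edge count. First, by Proposition \ref{prop:toomanyedges}, $P_{n,e}=0$ whenever $N-\lfloor n/2 \rfloor < e \leq N$, so those values do not affect the maximum. For $e = N - \lfloor n/2 \rfloor - m'$ with $0 \leq m' \leq m$, we have $P_{n,e} = U^n_{m'} / \binom{N}{e}$, and by Remark \ref{rem:u0case}, $U^n_0=1$. It therefore suffices to show that, for every fixed $m' \in \{1,\dots,m\}$ and all $n$ sufficiently large,
\[
\frac{1}{\binom{N}{\lfloor n/2 \rfloor}} \;>\; \frac{U^n_{m'}}{\binom{N}{\lfloor n/2 \rfloor + m'}},
\]
or equivalently
\[
\frac{\binom{N}{\lfloor n/2 \rfloor + m'}}{\binom{N}{\lfloor n/2 \rfloor}} \;>\; U^n_{m'}.
\]

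For the right-hand side, Proposition \ref{prop:numofedges} together with Remark \ref{rem:Nndfinite} shows that once $n \geq 6m'$ the value $U^n_{m'}$ equals a constant depending only on $m'$ and the parity of $n$. In particular, $U^n_{m'}$ is bounded above by some constant $C_{m'}$ independent of $n$, obtained by taking the maximum of its even- and odd-parity values.

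For the left-hand side, I would expand the ratio as a telescoping product,
\[
\frac{\binom{N}{\lfloor n/2 \rfloor + m'}}{\binom{N}{\lfloor n/2 \rfloor}} \;=\; \prod_{j=1}^{m'} \frac{N - \lfloor n/2 \rfloor - j + 1}{\lfloor n/2 \rfloor + j}.
\]
Since $N = \binom{n}{2}$ is of order $n^2/2$ while $\lfloor n/2 \rfloor$ is of order $n/2$, each factor is of order $n$ as $n \to \infty$, so the full product grows like a positive constant times $n^{m'}$. Hence for $n$ sufficiently large (depending on $m'$ and $C_{m'}$), this ratio exceeds $C_{m'}$, and the desired inequality holds. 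Choosing a threshold $M_{m'}$ past which this occurs for each $m' \in \{1,\dots,m\}$ and then setting $M = \max_{1 \leq m' \leq m} M_{m'}$ produces a single $M$ that handles every relevant value of $e$.

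There is no genuine conceptual obstacle; the work is essentially routine bookkeeping once one has the two key inputs. The main points to attend to are (i) using Proposition \ref{prop:numofedges} separately for the two parities of $n$ when bounding $U^n_{m'}$, and (ii) checking that the lower bound on each factor of the telescoping product is genuinely linear in $n$, for instance by verifying $\frac{N-\lfloor n/2 \rfloor - j+1}{\lfloor n/2 \rfloor + j} \geq n/4$ for all sufficiently large $n$ and all $j \leq m$. Both points are elementary, and no additional structural information about $\mathcal{U}^n_{m'}$ beyond its cardinality bound is required.
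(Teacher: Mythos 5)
Your argument breaks at the very first identification: you set $P_{n,e} = U^n_{m'}/\binom{N}{e}$, but the probability space in this paper is the set $\mathcal{G}_{n,e}$ of \emph{unlabeled} graphs (the paper fixes this convention in the introduction, and it is why the Monte Carlo section uses the Dixon--Wilf/Wormald machinery and why Wright's paper on unlabelled graphs is cited). The denominator is therefore $G_{n,e} = |\mathcal{G}_{n,e}|$, not the labeled count $\binom{N}{e}$, while your numerator $U^n_{m'}$ is an unlabeled count (that is the only reading under which it stabilizes to a constant via Proposition \ref{prop:numofedges} and Remark \ref{rem:Nndfinite}). Mixing the two does not compute $P_{n,e}$, and the telescoping-binomial estimate says nothing about the ratio that actually matters, namely $G_{n,\lfloor n/2\rfloor+k+1}/G_{n,\lfloor n/2\rfloor+k}$ (after passing to complements, $G_{n,e}=G_{n,N-e}$). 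Establishing that this \emph{unlabeled} ratio tends to infinity is precisely the nontrivial input of the paper's proof, supplied by Corollary \ref{cor:wright}, which rests on Wright's asymptotic formula; at edge density $e\approx n/2$ automorphism groups are enormous and vary with the structure, so $G_{n,e}$ is not $\binom{N}{e}/n!$ up to harmless factors and cannot be read off from binomial coefficients. So the step you call ``routine bookkeeping'' is exactly where the theorem's real content lies.

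Moreover, the labeled heuristic is not merely unjustified, it points the wrong way. If you count labeled graphs consistently on both sides, the numerators grow like $n!/|\operatorname{Aut}|$ and are no longer bounded: for $n$ even, the number of labeled graphs with $e=N-\lfloor n/2\rfloor$ edges that are universally solvable is $n!/\bigl(2^{n/2}(n/2)!\bigr)$ (complements of perfect matchings), while at $e=N-\lfloor n/2\rfloor-1$ it is $n!/\bigl(2\cdot 2^{(n-4)/2}((n-4)/2)!\bigr)$, a factor of about $n^2/2$ larger, whereas $\binom{N}{\lfloor n/2\rfloor+1}/\binom{N}{\lfloor n/2\rfloor}\approx n$. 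Hence the labeled analogue of the statement is false for large $n$, which shows your proposed comparison cannot be repaired within the labeled model; the unlabeled asymptotics (Wright's theorem, or an equivalent analysis of automorphism classes near edge count $n/2$) are essential. The surrounding scaffolding of your write-up (discarding $e>N-\lfloor n/2\rfloor$ via Proposition \ref{prop:toomanyedges}, bounding $U^n_{m'}$ by a constant, taking a maximum threshold over $m'\le m$) matches the paper's structure, but the core inequality needs Corollary \ref{cor:wright} in place of the binomial computation.
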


\begin{remark}
Our computational results, listed in Tables \ref{tab:8} to \ref{tab:13} in Section 8, give evidence supporting the conjectures that when $m=2$ and $3$ the respective values $M = 9$ and $11$ suffice.
\end{remark}

Before proving Theorem \ref{thm:main}, we make use of the following corollary to Theorem 5 from \cite{Wr}. We omit the proof, as the result is a direct algebraic consequence of Theorem 5 from \cite{Wr}.

\begin{corollary}
Fix a positive integer $m$. Let $k$ be any integer with $0 \leq k < m$. Then
$$\displaystyle \lim_{n \to \infty} \frac{G_{n,e_{k+1}}}{G_{n,e_k}} = \infty$$
where $G_{n,e} = |\mathcal{G}_{n,e}|$ is the number of graphs with $n$ vertices and $e$ edges and $e_k = \lfloor \frac{n}{2} \rfloor + k$.
\label{cor:wright}
\end{corollary}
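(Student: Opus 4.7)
The plan is to invoke Wright's Theorem 5 from \cite{Wr}, which furnishes an asymptotic enumeration of $G_{n,e}$ valid in the ``sparse'' regime where $e = \lfloor n/2 \rfloor + k$ with $k$ bounded. In this regime, a typical graph in $\mathcal{G}_{n,e}$ decomposes into a near-perfect matching on most of the vertices together with a small kernel of excess adjacencies on $O(k)$ vertices; Wright's formula quantifies the leading-order contribution of this decomposition, isolating a factor that depends on $n$ from a combinatorial factor that depends on $k$.

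With the asymptotic in hand, the proof reduces to algebra. I would substitute Wright's expressions for $G_{n,e_{k+1}}$ and $G_{n,e_k}$ into the quotient, cancel the matching-factor contributions common to both, and read off the leading-order behavior in $n$ of what remains. Heuristically, incrementing the edge count by one contributes one extra edge to the kernel, and the number of ways to embed this extra edge scales with $n$ up to bounded automorphism corrections, so the ratio should grow like $n$. The labeled analogue confirms the heuristic exactly:
\[
\frac{\binom{N}{e_{k+1}}}{\binom{N}{e_k}} \;=\; \frac{N - e_k}{e_k + 1} \;\sim\; \frac{n^2/2}{n/2} \;=\; n,
\]
so at the labeled level the ratio diverges at rate $\Theta(n)$.

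The main obstacle, and the reason Wright's theorem is needed rather than just the labeled count, is controlling the ratio of average automorphism-group sizes for graphs in $\mathcal{G}_{n,e_k}$ versus $\mathcal{G}_{n,e_{k+1}}$. Wright's asymptotic precisely captures this symmetry correction; because $k$ and $k+1$ both lie in a bounded range, the symmetry factors attached to the two leading terms differ by only a constant depending on $k$, so their ratio remains bounded as $n \to \infty$. Consequently the $\Theta(n)$ growth inherited from the labeled-count heuristic survives the passage to unlabeled graphs, giving the claimed limit $+\infty$.
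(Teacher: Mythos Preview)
Your plan---invoke Wright's Theorem~5, substitute his asymptotic for $G_{n,e_k}$ and $G_{n,e_{k+1}}$, form the quotient, and read off the divergence---is exactly the route the paper takes. One small caution for when you carry it out: Wright's formula is not the $n$-vertex labeled count divided by a symmetry factor but rather $G_{n,e_k}\sim \tfrac{\sqrt{\pi}}{\exp(1)}\sqrt{\tfrac{2V}{1+\log V}}\,\binom{V(V-1)/2}{e_k}\big/V!$ with $V=\lfloor v\rfloor$ determined by $v\ln v=2e_k$ (so $V\approx n/\log n$ here), and you must first check his hypothesis $\tfrac{2e_k}{n}-\log n\le -A/\sqrt{n}$; the divergence of the ratio still follows, though the actual rate is closer to $n/\log^2 n$ than the $\Theta(n)$ your labeled heuristic suggests.
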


Note that Theorem 5 of \cite{Wr} applies as we can choose $A > 0$ and $n$ sufficiently large so that 
$$\displaystyle \frac{2e_k}{n} - \log n < 1 + \frac{2m}{n} - \log n \leq \frac{-A}{\sqrt{n}}.$$ Now Theorem 5 of \cite{Wr} gives the following form for $G_{n,e_k}$ and an analogous form for $G_{n,e_{k+1}}$,
$$G_{n,e_k} = \frac{\sqrt{\pi}}{\exp(1)} \sqrt{\frac{2V}{1+\log V}} \: \frac{\binom{V(V-1)/2}{e_k}}{V!} \: (1 + \eta).$$
Here, $\eta$ is $O(n^{-C})$ for some constant $C > 0$, the value $V = \lfloor v \rfloor$ for the unique real number $v$ so that $v\ln v = 2e_k$, and $\exp(1) \approx 2.718$ is Euler's number.  Writing $G_{n,e_k}$ and $G_{n,e_{k+1}}$ in this form and taking the limit as $n$ approaches $\infty$ gives Corollary \ref{cor:wright}.

\begin{proof}[Proof of Theorem \ref{thm:main}]
By Proposition \ref{prop:toomanyedges}, note that the probability $P_{n,e} = 0$ for each $e$ with $N - \left\lfloor \frac{n}{2} \right\rfloor <  e \leq N$. Now assume $n \geq 6m$ and $0 \leq k < m$. Let $$K = \max_{0 \leq k < m} \left( \dfrac{U^n_{k+1}}{U^n_{k}}, \dfrac{U^{n+1}_{k+1}}{U^{n+1}_{k}} \right).$$
By Remarks \ref{rem:u0case}, \ref{rem:ulowcases}\, and \ref{rem:unonzero} each of $U^n_k$ and $U^{n+1}_k$ are positive and by Proposition \ref{prop:numofedges} and Remark \ref{rem:Nndfinite} the value $K$ is constant.  We write $ N - \left\lfloor \frac{n}{2} \right\rfloor - k = N - e_k$ and note that $P_{n,N-e_k} = U^n_k/G_{n,N-e_k}$. By Corollary \ref{cor:wright}, there exists $M$ so that if $n > M$ the inequality $$K < 
\displaystyle \frac{G_{n, e_{k+1}}}{G_{n, e_k}} = \displaystyle \frac{G_{n,N-e_{k+1}}}{G_{n,N-e_k}},$$
holds for each $k$. Now for all $n > M$, 
$$\displaystyle \frac{U^n_0}{G_{n,N-e_0}} > \displaystyle \frac{U^n_1}{G_{n,N-e_1}} > \ldots > \displaystyle \frac{U^n_m}{G_{n,N-e_m}} $$
and
$P_{n,e}$ is maximized when $e = N-e_0 = N - \left\lfloor \frac{n}{2} \right\rfloor$.
\end{proof}

\section{Monte Carlo Experiments}

In this section, we describe our computational verification of Theorem \ref{thm:main}. For each $n$ from 8 to 13 and each $e$ from 1 to $N - 1$, we performed 1,000,000 trials in which we selected a graph uniformly at random from $\mathcal{G}_{n,e}$. We neglected the $e=0$ and $e=N$ cases as $P_{n,0} = 1$ and $P_{n,N} = 0$ for each $n$. We then determined, for each chosen graph, if Lights Out is universally solvable on the graph by determining if the graph's neighborhood matrix is invertible. We accomplished this by implementing, in Java, a graph selection algorithm first described by Wormald in \cite{Wo}. More specifically, for each $e$ with $e \leq \lfloor N / 2 \rfloor$ we applied Wormald's algorithm directly, and for values of $e$ with $e \geq \lfloor N / 2 \rfloor + 1$ we selected a random graph with $N - e$ edges and used the complement of that graph. The results of these experiments are given in Tables \ref{tab:8} to \ref{tab:13}. The code for our implementation is available at \href{https://github.com/riyagoyal2134/LightsOut.git}{github.com/riyagoyal2134/LightsOut.git}. 

\begin{table}[ht]
\centering
\caption{For each $e$ from 1 to 27, the probability, approximated by 1,000,000 trials, that Lights Out is universally solvable on a graph chosen uniformly at random from the set of graphs with 8 vertices and $e$ edges. The margin of error at 95$\%$ confidence is less than .001.}
\begin{tabular}{cccccc}
\hline\\[-5 pt]

Number & Probability & Number & Probability & Number & Probability \\
of & Universally & of & Universally & of & Universally \\
Edges & Solvable & Edges & Solvable & Edges & Solvable \\[5 pt]
\hline\\[-5 pt]

1 & 0 & 10 & 0.343950 & 19 & 0.219285\\
2 & 0.499919 & 11 & 0.346355 & 20 & 0.167277\\
3 & 0.199316 & 12 & 0.359100 & 21 & 0.104666\\
4 & 0.363499 & 13 & 0.353405 & 22 & 0.071329\\
5 & 0.249911 & 14 & 0.354413 & 23 & 0.041872\\
6 & 0.321910 & 15 & 0.345785 & 24 & 0.091008\\
7 & 0.312796 & 16 & 0.336634 & 25 & 0 \\
8 & 0.321971 & 17 & 0.304621 & 26 & 0  \\
9 & 0.325698 & 18 & 0.273169 & 27 & 0\\
[5 pt]
\hline
\end{tabular}
\label{tab:8}
\end{table}

\begin{table}[ht]
\centering
\caption{For each $e$ from 1 to 35, the probability, approximated by 1,000,000 trials, that Lights Out is universally solvable on a graph chosen uniformly at random from the set of graphs with 9 vertices and $e$ edges. The margin of error at 95$\%$ confidence is less than .001.}
\begin{tabular}{cccccc}
\hline\\[-5 pt]

Number & Probability & Number & Probability & Number & Probability \\
of & Universally & of & Universally & of & Universally \\
Edges & Solvable & Edges & Solvable & Edges & Solvable \\[5 pt]
\hline\\[-5 pt]

1 & 0 & 13 & 0.359621 & 25 & 0.289836\\
2 & 0.499920 & 14 & 0.364539 & 26 & 0.264087\\
3 & 0.200248 & 15 & 0.363156 & 27 & 0.215122\\
4 & 0.363940 & 16 & 0.366063 & 28 & 0.180504\\
5 & 0.240663 & 17 & 0.364274 & 29 & 0.114246\\
6 & 0.302106  & 18 & 0.366417 & 30 & 0.095091\\
7 & 0.277388  & 19 & 0.362569 & 31 & 0.039722 \\
8 & 0.307850  & 20 & 0.365207 & 32 & 0.090537  \\
9 & 0.307524  & 21 & 0.360068 & 33 & 0\\
10 & 0.331077 & 22 & 0.353830 & 34 & 0\\
11 & 0.341577 & 23 & 0.336727 & 35 & 0 \\
12 & 0.354687 & 24 & 0.322319 &  & \\
[5 pt]
\hline
\end{tabular}
\label{tab:9}
\end{table}

\begin{table}[ht]
\centering
\caption{For each $e$ from 1 to 44, the probability, approximated by 1,000,000 trials, that Lights Out is universally solvable on a graph chosen uniformly at random from the set of graphs with 10 vertices and $e$ edges. The margin of error at 95$\%$ confidence is less than .001.}
\begin{tabular}{cccccc}
\hline\\[-5 pt]

Number & Probability & Number & Probability & Number & Probability \\
of & Universally & of & Universally & of & Universally \\
Edges & Solvable & Edges & Solvable & Edges & Solvable \\[5 pt]
\hline\\[-5 pt]

1 & 0 & 16 & 0.373795  & 31 & 0.265220\\
2 & 0.499400 & 17 & 0.378671 & 32 & 0.222100\\
3 & 0.199833 & 18 & 0.382819 & 33 & 0.173787\\
4 & 0.363571 & 19 & 0.386209 & 34 & 0.126419\\
5 & 0.230841 & 20 & 0.388778 & 35 & 0.082956\\
6 & 0.287559  & 21 & 0.389532 & 36 & 0.054569\\
7 & 0.254501  & 22 & 0.390373 & 37 & 0.032591 \\
8 & 0.279321  & 23 & 0.389508 & 38 & 0.024290  \\
9 & 0.285446  & 24 & 0.387827 & 39 & 0.015224\\
10 & 0.304450 & 25 & 0.382296 & 40 & 0.038469\\
11 & 0.320889 & 26 & 0.375407 & 41 & 0 \\
12 & 0.336026 & 27 & 0.365266 & 42 & 0\\
13 & 0.348091 & 28 & 0.350667 & 43 & 0\\
14 & 0.357128 & 29 & 0.328518 & 44 & 0\\
15 & 0.366398 & 30 & 0.300978 &  & \\
[5 pt]
\hline
\end{tabular}
\label{tab:10}
\end{table}

\begin{table}[ht]
\centering
\caption{For each $e$ from 1 to 54, the probability, approximated by 1,000,000 trials, that Lights Out is universally solvable on a graph chosen uniformly at random from the set of graphs with 11 vertices and $e$ edges. The margin of error at 95$\%$ confidence is less than .001.}
\begin{tabular}{cccccc}
\hline\\[-5 pt]
Number & Probability & Number & Probability & Number & Probability \\
of & Universally & of & Universally & of & Universally \\
Edges & Solvable & Edges & Solvable & Edges & Solvable \\[5 pt]
\hline\\[-5 pt]
1 & 0  & 19 & 0.388856  & 37 & 0.358598\\
2 & 0.499724  & 20 & 0.393536 & 38 & 0.344516\\
3 & 0.200551  & 21 & 0.396004 & 39 & 0.324121\\
4 & 0.363507  & 22 & 0.396182 & 40 & 0.299494\\
5 & 0.230845 & 23 & 0.397806 & 41 & 0.267545\\
6 & 0.283949  & 24 & 0.398648 & 42 & 0.229736\\
7 & 0.244384  & 25 & 0.398657 & 43 & 0.184645 \\
8 & 0.263545  & 26 & 0.397970 & 44 & 0.140461  \\
9 & 0.262488  & 27 & 0.397904 & 45 & 0.096906\\
10 & 0.283742 & 28 & 0.397958 & 46 & 0.068320\\
11 & 0.295869 & 29 & 0.398135 & 47 & 0.040787 \\
12 & 0.314470 & 30 & 0.397270 & 48 & 0.035003\\
13 & 0.328777 & 31 & 0.395805 & 49 & 0.014937\\
14 & 0.344268 & 32 & 0.393628 & 50 & 0.038504\\
15 & 0.357644 & 33 & 0.390987 & 51 & 0\\
16 & 0.368724 & 34 & 0.386070 & 52 & 0 \\
17 & 0.376659 & 35 & 0.378623 & 53 & 0\\
18 & 0.383929 & 36 & 0.370198 & 54 & 0 \\
[5 pt]
\hline
\end{tabular}
\label{tab:11}
\end{table}

\begin{table}[ht]
\centering
\caption{For each $e$ from 1 to 65, the probability, approximated by 1,000,000 trials, that Lights Out is universally solvable on a graph chosen uniformly at random from the set of graphs with 12 vertices and $e$ edges. The margin of error at 95$\%$ confidence is less than .001.}
\begin{tabular}{cccccc}
\hline\\[-5 pt]
Number & Probability & Number & Probability & Number & Probability \\
of & Universally & of & Universally & of & Universally \\
Edges & Solvable & Edges & Solvable & Edges & Solvable \\[5 pt]
\hline\\[-5 pt]
1 & 0  & 23 & 0.398679 & 45 & 0.338081\\
2 & 0.499463 & 24 & 0.400726 & 46 & 0.315321\\
3 & 0.200089 & 25 & 0.402630 & 47 & 0.288664\\
4 & 0.364492 & 26 & 0.404618 & 48 & 0.255777\\
5 & 0.230357 & 27 & 0.406422 & 49 & 0.217856\\
6 & 0.279425 & 28 & 0.406209 & 50 & 0.178374\\
7 & 0.240084 & 29 & 0.407586 & 51 & 0.136891\\
8 & 0.255557 & 30 & 0.407843 & 52 & 0.098461 \\
9 & 0.248556 & 31 & 0.407941 & 53 & 0.065641\\
10 & 0.264135 & 32 & 0.408232 & 54 & 0.041626\\
11 & 0.275422 & 33 & 0.407777 & 55 & 0.025507\\
12 & 0.290852 & 34 & 0.407194 & 56 & 0.016212\\
13 & 0.307977 & 35 & 0.408278 & 57 & 0.010773\\
14 & 0.323746 & 36 & 0.407095 & 58 & 0.008299\\
15 & 0.340155 & 37 & 0.404648 & 59 & 0.005670\\
16 & 0.354000 & 38 & 0.401894 & 60 & 0.014655 \\
17 & 0.364707 & 39 & 0.398517 & 61 & 0\\
18 & 0.375543 & 40 & 0.392940 & 62 & 0 \\
19 & 0.382263 & 41 & 0.387447 & 63 & 0\\
20 & 0.387929 & 42 & 0.379956 & 64 & 0 \\
21 & 0.392398 & 43 & 0.369140 & 65 & 0\\
22 & 0.396219 & 44 & 0.355199 &  &  \\
[5 pt]
\hline
\end{tabular}
\label{tab:12}
\end{table}

\begin{table}[ht]
\centering
\caption{For each $e$ from 1 to 77, the probability, approximated by 1,000,000 trials, that Lights Out is universally solvable on a graph chosen uniformly at random from the set of graphs with 13 vertices and $e$ edges. The margin of error at 95$\%$ confidence is less than .001.}
\begin{tabular}{cccccc}
\hline\\[-5 pt]
Number & Probability & Number & Probability & Number & Probability \\
of & Universally & of & Universally & of & Universally \\
Edges & Solvable & Edges & Solvable & Edges & Solvable \\[5 pt]
\hline\\[-5 pt]
1 & 0  & 27 & 0.406430 & 53 & 0.387123\\
2 & 0.499294 & 28 & 0.408858 & 54 & 0.380220\\
3 & 0.199568 & 29 & 0.409305 & 55 & 0.371224\\
4 & 0.363899 & 30 & 0.410125 & 56 & 0.358582\\
5 & 0.230106 & 31 & 0.411138 & 57 & 0.342884\\
6 & 0.279965 & 32 & 0.411297 & 58 & 0.324630\\
7 & 0.238693 & 33 & 0.412302 & 59 & 0.300909\\
8 & 0.252276 & 34 & 0.412295 & 60 & 0.272193\\
9 & 0.241871 & 35 & 0.412268 & 61 & 0.236771\\
10 & 0.253103 & 36 & 0.411811 & 62 & 0.198391\\
11 & 0.260253 & 37 & 0.411792 & 63 & 0.156529\\
12 & 0.272231 & 38 & 0.412040 & 64 & 0.116132\\
13 & 0.286001 & 39 & 0.412263 & 65 & 0.080770\\
14 & 0.302467 & 40 & 0.411530 & 66 & 0.053189\\
15 & 0.318041 & 41 & 0.411892 & 67 & 0.033189\\
16 & 0.333833 & 42 & 0.411205 & 68 & 0.022349\\
17 & 0.346637 & 43 & 0.411874 & 69 & 0.014105\\
18 & 0.358702 & 44 & 0.410961 & 70 & 0.012321 \\
19 & 0.368772 & 45 & 0.410177 & 71 & 0.005739\\
20 & 0.376524 & 46 & 0.410095 & 72 & 0.014552 \\
21 & 0.384010 & 47 & 0.407995 & 73 & 0\\
22 & 0.389572 & 48 & 0.405994 & 74 & 0 \\
23 & 0.394758 & 49 & 0.403351 & 75 & 0\\
24 & 0.398829 & 50 & 0.401896 & 76 & 0 \\
25 & 0.401359 & 51 & 0.397383 & 77 & 0\\
[5 pt]
\hline
\end{tabular}
\label{tab:13}
\end{table}

For completeness, we describe Wormald's algorithm below.  This algorithm is listed as ``Algorithm Random Orbit II'' and is applied to our situation in Section 4 of \cite{Wo}. Our discussion is brief; please see \cite{Wo} for further details.

Wormald's algorithm is based on an algorithm first described by Dixon and Wilf in \cite{Dixon} that chooses a graph uniformly at random from the set of graphs with $n$ vertices. Both algorithms make use of the action of the symmetric group $\sigma_n$ on the set of labeled graphs with $n$ vertices where the action is given by permuting the labels. Note that each unlabeled graph with $n$ vertices corresponds to an orbit of the action, and so the selection of an unlabeled graph reduces to choosing an orbit. Elements in each orbit occur $|\sigma_n|$ times among the ordered pairs $(\sigma, \Gamma)$ where $\sigma \in \sigma_n$ and $\Gamma$ is a labeled graph fixed by $\sigma$, we denote the set of such graphs by $\fix(\sigma)$. It follows that selecting an ordered pair uniformly at random selects an orbit uniformly at random.  Additionally, for each orbit $\mathcal{O}$ the cardinality $|\mathcal{O} \cap \fix(\sigma)|$ is constant as $\sigma$ ranges over a conjugacy class of $\sigma_n$. Given these observations, Dixon and Wilf's algorithm (1) selects a conjugacy class of $\sigma_n$, weighting the selection by the cardinality of each conjugacy class, together with a representative $\sigma$ of that conjugacy class; (2) selects a labeled graph $\Gamma$ uniformly at random from $\fix(\sigma)$; and (3) removes the labels from $\Gamma$. Naively, it is possible to pick a graph uniformly at random from $\mathcal{G}_{n,e}$ by performing Dixon and Wilf's algorithm repeatedly until a graph with $e$ edges is obtained. This naive approach is ineffective because there are relatively few graphs with $n$ vertices that have $e$ edges. Wormald's algorithm improves on the naive approach in two ways.

\begin{enumerate}
    \item Wormald's algorithm uses biased sampling to preferentially select graphs with $e$ edges; this is accomplished through the choice of $p_i$ and $q_i$ described later in this section. While this distribution makes it more likely that a graph with $e$ edges is selected, it remains uniform on the graphs with $e$ edges.
    \item Dixon and Wilf's algorithm begins by selecting a conjugacy class of $\sigma_n$. Weighting this choice requires, in the worst case, computing the number of elements of each conjugacy class of $\sigma_n$; there is a conjugacy class for each integer partition of $n$ making this computationally expensive for large $n$. Instead, Wormald's algorithm begins by selecting a subset $R_i$ of $\sigma_n$ where $R_i$ consists of permutations that leave $n-i$ elements fixed. Computing $|R_i|$ is not difficult and each trial requires this cardinality for only one value of $i$. 
\end{enumerate}

Selecting a subset $R_i$, rather than a conjugacy class, has the drawback that for an arbitrary orbit $\mathcal{O}$ the cardinality $|\mathcal{O} \cap \fix(\sigma)|$ is not constant as $\sigma$ ranges over $R_i$. Wormald's algorithm handles this inconsistency by weighting the selection of $R_i$ by a scaled overestimate for the maximum number, as $\sigma$ ranges over $R_i$, of ordered pairs $(\sigma, \Gamma)$ where $\Gamma \in \fix(\sigma)$ is a labeled graph with $e$ edges. This overestimate is denoted $B_i$ and, more specifically, we choose $B_i \geq |R_i|/P(\Gamma)$ where $P(\Gamma)$ is the probability that $\Gamma$ is chosen from $\fix(\sigma)$. Weighting by these overestimates simplifies the computational complexity of the process but introduces a failure state that is described later in this section.

The computation of the $B_i$'s uses values $p_i$ and $q_i$ which are obtained from the solution $r$ of the cubic equation
$$Nr(r^2 + 1) - e(r+1)(r^2 + 1) + \frac{i^2+2i-2ni}{2} \: r(r-1) = 0.$$
In our implementation, we approximate this solution by applying Newton's method.  Here, $p_i = \frac{r}{r+1}$ and $q_i = 1 - p_i$.  Now we determine the values $B_i$ by
$$B_0 = p_1^{-e}q_1^{e-N}, \; \text{ and } \; B_i = \left(\dfrac{n!}{(n-i)!}\right) p_i^{-e}q_i^{e-N} (p_i^2 + q_i^2)^{\frac{i^2+2i-2ni}{4}},$$
where $2 \leq i \leq n$. We then select an element $\sigma \in R_i$ uniformly at random. When $i=0$ we choose the identity permutation for $\sigma$ and when $i \ne 0$ we choose $\sigma$ by selecting a random permutation of $\{1,2,\ldots,i\}$ and accepting this choice provided that the selected permutation has no fixed points. We repeat this process if necessary; the expected number of trials needed to get a fixed point free permutation of $\{ 1, 2, \ldots, i\}$ approaches a value less than 3 as $n$ approaches $\infty$. Amending this permutation with the identity function on $\{ i+1, i+2, \ldots, n\}$ produces $\sigma$. We take $\{1,2, \ldots, i\}$ to be the first $i$ labels; conjugate elements of $\sigma_n$ will yield labeled graphs that differ only by permuting the labels and so will not affect the invertibility of the resulting neighborhood matrix.  

The action of $\sigma$ on $\{1, 2, \ldots, n\}$ induces an action on the 2-element subsets of $\{1, 2, \ldots, n\}$. Let $D$ be an orbit of 2-element subsets under this action. We select a collection of these orbits by including all of the edges represented by $D$ with probability $p_i^{|D|} / (p_i^{|D|} + q_i^{|D|})$. The 2-subsets that have been selected produce a labeled graph $\Gamma$, and if this graph has $e$ edges, it is accepted. Otherwise, we restart the algorithm. Note that the probability of choosing $\Gamma$ from the set of graphs fixed by $\sigma$ is
$$P(\Gamma) = \dfrac{p_i^e q_i^{N - e}}{\displaystyle \prod^n_{j=1} (p_i^j + q_i^j)^{h(\sigma,j)}}$$
where $h(\sigma,j)$ is the number of orbits $D$ of cardinality $j$. 

Now that a labeled graph with $e$ edges has been chosen, it is necessary to check if the failure state has occurred or if this choice constitutes a successful trial.  The algorithm fails with probability $1 - |R_i|/B_iP(\Gamma)$ and in this case, we restart the algorithm. The algorithm succeeds, returning $\Gamma$, with probability $|R_i|/B_iP(\Gamma)$. To complete our computation, we determine the invertibility of the neighborhood matrix of $\Gamma$.  

\begin{table}[ht]
\centering
\caption{For graphs with 8 to 13 vertices, the approximate runtime in hours (rounded to the nearest half hour) to complete 1,000,000 trials of our implementation of Wormald's algorithm.  Algorithm was implemented in Java on a MacBook Pro with an Apple M3 Pro chip, 18~GB of memory, running macOS Sequoia Version 15.5.}
\begin{tabular}{cccc}
\hline\\[-5 pt]
Number of & Approximate & Number of&  Approximate \\
Vertices & Runtime in Hours & Vertices & Runtime in Hours \\[5 pt]
8 & 4.5 & 11 & 54\\
9 & 12 & 12 & 114\\
10 & 30.5 & 13 & 198\\[5 pt]
\hline
\end{tabular}
\label{tab:runtime}
\end{table}

The runtime data for our implementation is given in Table \ref{tab:runtime}. Our implementation leverages the first of Wormald's improvements to the naive use of Dixon and Wilf's algorithm but not the second improvement as we only consider values of $n$ ranging from 8 to 13. It is possible that our runtime could be improved by selecting a conjugacy class of $\sigma_n$ and working with exact probabilities rather than selecting a set $R_i$ and including a failure state. We have not explored this potential improvement.

\section{Future Work}

There are multiple ways to  build on this work. One direction is to study variations of Lights Out that include more than two states for the lights. These versions have been explored by other researchers in \cite{BBS,EEJJMS,GP,GMT,HMP}. 

Our computational results suggest that $P_{n,e}$ has a local maximum when $e \approx N/2$.  Investigating this local maximum is another direction for future work. The precise determination of the value of this maximum, along with a rigorous proof of its existence, remains an open question.

Theorem \ref{thm:main} shows that when a graph is chosen uniformly at random from $\mathcal{G}_{n,e}$, the probability $P_{n,e}$ has a local maximum when $e = N - \lfloor \frac{n}{2} \rfloor$ for sufficiently large values of $n$. Furthermore, Theorem \ref{thm:main} states that for each $m$ there is a minimum number $f(m)$ so that for all $n$ with $n > f(m)$ the probability $P_{n,e}$ reaches its absolute maximum on the range of values of $e$ from $N - \lfloor \frac{n}{2} \rfloor - m$ to $N$
when $e = N - \lfloor \frac{n}{2} \rfloor$. Our computational results give conjectures for $f(1)$, $f(2)$, and $f(3)$. Determining the function that controls the relationship between the number of vertices and the breadth of edge values on which $e = N - \lfloor \frac{n}{2} \rfloor$ maximizes $P_{n,e}$ is an open and interesting question.

\bibliographystyle{plain}

\begin{thebibliography}{10}

\smallskip
\bibitem{AS1}
A.T. Amin and P.J. Slater, {``Neighborhood domination with parity restrictions in
graphs''}, \textit{Congr. Numer.} \textbf{91}(1992), 19–30. \MR{1208985}

\smallskip
\bibitem{AS2}
A.T. Amin and P.J. Slater, {``All parity realizable trees''}, \textit{J. Combin. Math. Combin. Comput.} \textbf{20} (1996), 53–63. \MR{1376697} Available at \href{https://combinatorialpress.com/article/jcmcc/Volume%20020/vol-20-paper%205.pdf}{Combinatorial Press}.

\smallskip
\bibitem{Anderson}
M. Anderson and T. Feil, \href{https://doi.org/10.1080/0025570X.1998.11996658}{``Turning \emph{Lights Out} with linear algebra''}, \textit{Math. Mag.} \textbf{71} (1998), 300-303. \MR{1573341}

\smallskip
\bibitem{BBS}
L. Ballard, E. Budge, and D. Stephenson, 
\href{https://doi.org/10.2140/involve.2019.12.181}{``\emph{Lights Out} for graphs related to one another by constructions''}, \textit{Involve} \textbf{12}:2 (2019), 181-201. \MR{3864213}

\smallskip
\bibitem{Batal}
A. Batal, \href{https://doi.org/10.31801/cfsuasmas.1051208} {``Parity of an odd dominating set''}, \textit{Commun. Fac. Sci. Univ. Ank. Ser. A1. Math. Stat.}, \textbf{71}:4 (2022), 1023–1028. \MR{4522662}

\smallskip
\bibitem{CM}
L. Chen and K. Mazur, \href{https://doi.org/10.46787/pump.v8i.4268} {``A recursive approach to a multi-state cylindrical \emph{Lights Out} game''}, \textit{The PUMP Journal of Undergraduate Research}, \textbf{8} (2025), 225–241. \MR{4917817}

\smallskip
\bibitem{CHKR}
R. Cowen, S.H. Hechler, J.W. Kennedy, and A. Ryba, {``Inversion and neighborhood
inversion in graphs''}, \textit{Graph Theory Notes N.Y.} \textbf{37} (1999), 37–41.
\MR{1725188}

\smallskip
\bibitem{Dixon}
J. D. Dixon and H. S. Wilf, \href{https://doi.org/10.1016/0196-6774(83)90021-4}{``The random selection of unlabeled graphs''}, \textit{J. Algorithms} \textbf{4} (1983), 205-213. \MR{0710717}

\smallskip
\bibitem{EEJJMS}
S. Edwards, V. Elandt, N. James, K. Johnson, Z. Mitchell, and D. Stephenson,
\href{https://doi.org/10.2140/involve.2010.3.17}{``\emph{Lights Out} on finite graphs''}, \textit{Involve} \textbf{3}:1 (2010), 17–32. \MR{2672500}

\smallskip
\bibitem{EES}
H. Eriksson, K. Eriksson, and J. Sj\"ostrand, \href{https://doi.org/10.1006/aama.2001.0739}{``Note on the lamp lighting problem''}, \textit{Adv. Appl. Math.} \textbf{27} (2004), 357–366. \MR{1868970}

\smallskip
\bibitem{FM}
B. Forrest and N. Manno, \href{https://doi.org/10.46787/pump.v5i0.2593}{``\emph{Lights Out} on a random graph''}, \textit{The Pump Journal of Undergraduate Research} \textbf{5} (2022), 165–175. \MR{4478559}

\smallskip
\bibitem{GP}
A. Giffen and D. B. Parker, {``On generalizing the \emph{Lights Out} game and a
generalization of parity domination''}, \textit{Ars Combin.} \textbf{111} (2013), 273–288.
\MR{3100179} Available at \href{https://combinatorialpress.com/article/ars/Volume%20111/volume-111-paper-23.pdf}{Combinatorial Press}.

\smallskip
\bibitem{GKT}
J.~Goldwasser,  W.~Klostermeyer, and G.~Trapp, \href{https://doi.org/10.1080/03081089708818520}{``Characterizing switch-setting problems''}, \textit{Linear Multilinear Algebra} \textbf{43}:1-3 (1997), 121–136.  \MR{1613183}  Available at \href{https://www.researchgate.net/publication/2384576_Setting_Switches_in_a_Grid}{Research Gate}.

\smallskip
\bibitem{GKTZ}
J.~Goldwasser, W.F.~Klostermeyer, G.E.~Trapp, G.E., and C.Q.~Zhang, ``Setting switches in a grid'', \textit{`` Technical Report TR-95-20}, Department of Statistics and Computer Science, West Virginia University (1995).

\smallskip
\bibitem{GMT}
S. Gravier, M. Mhalla, and E. Tannier, \href{https://doi.org/10.1016/S0304-3975(03)00285-8}{``On a modular domination game''},
\textit{Theoret. Comput. Sci.,} \textbf{306}:1-3 (2003), 291–303. \MR{2000178} 

\smallskip
\bibitem{HY}
Y. Hayata and M. Yamagishi,\href{https://doi.org/10.2140/involve.2019.12.713}{``On weight-one solvable configurations of \emph{Lights Out} puzzle''}, \textit{Involve}, \textbf{12}:4 (2019), 713-720. \MR{3941607}  

\smallskip
\bibitem{HMP}
M. Hunziker, A. Machiavelo, and J. Park,\href{https://doi.org/10.1016/j.tcs.2004.03.031}{``Chebyshev polynomials over finite fields and reversibility of $\sigma$-automata on square grids''}, \textit{Theoret. Comput. Sci.,} \textbf{320}:2-3 (2004), 465–483. \MR{2064312}

\smallskip
\bibitem{KP}
L. Keough and D. Parker. \href{https://doi.org/10.7151/dmgt.2481} {``An extremal problem for the neighborhood \emph{Lights Out} game''}, \textit{Discuss. Math. Graph Theory,} \textbf{44}:4 (2024), 997–1021. \MR{4740071}


\smallskip
\bibitem{AR}
A. Raghavan, ``On the rank of random, symmetric matrices over $\mathbb{Z}_{2}$ via random graphs'', \textit{Senior Thesis, Georgia Institute of Technology, Atlanta}, (2022, May 6).  Available at \href{https://repository.gatech.edu/server/api/core/bitstreams/4ebdf03a-f0e7-464f-affd-e2d1ca1785d9/content}{Georgia Institute of Technology}.

\smallskip
\bibitem{Sutner1}
K. Sutner, \href{https://doi.org/10.1007/BF03023823}{``Linear cellular automata and the Garden-of-Eden'',} \textit{Math. Intelligencer,} \textbf{11}:2 (1989), 49–53. \MR{0994964} 

\smallskip
\bibitem{Sutner2}
K. Sutner, \href{https://doi.org/10.2307/2323999}{``The $\sigma$-game and cellular automata''}, \textit{Amer. Math. Monthly,} \textbf{97}:1 (1990), 24–34. \MR{1034347}  

\smallskip
\bibitem{Wo}
N. C. Wormald, \href{https://doi.org/10.1137/0216048}{``Generating random unlabelled graphs''}, \textit{SIAM J. Comput.} \textbf{16}:4 (1987), 717-728. \MR{0899697}

\smallskip
\bibitem{Wr}
E.M. Wright, \href{https://doi.org/10.1112/plms/s3-28.4.577}{``Graphs on unlabelled nodes with a large number of edges''}, \textit{Proc. London Math. Soc. (3)} \textbf{28}:4 (1974), 577-594. \MR{0351922}

\smallskip
\bibitem{JY}
J.-M.H. Yates II, ``The \emph{Lights Out}! game: exploring solvability on graphs of order $N$'', \textit{Master’s Thesis, California State Polytechnic University, Pomona}, Spring 2023.  Available at \href{https://scholarworks.calstate.edu/downloads/8p58pm91w}{California State University System}.

\end{thebibliography}

\end{document}